\def \version {2021--05--07}
\newcommand{\eps}{\varepsilon}
\newtheorem{thm}{Theorem}
\newtheorem{lem}[thm]{Lemma}
\newtheorem{prop}[thm]{Proposition}
\newtheorem{rmk}[thm]{Remark}
\newtheorem{conj}{Conjecture}
\def \bsk {\bigskip}
\def \msk {\medskip}
\def \es {\emptyset}
\def \nmr {\begin{enumerate}}
\def \enmr {\end{enumerate}}
\def \tmz {\begin{itemize}}
\def \etmz {\end{itemize}}
\newcommand{\cG}{{\cal G}}
\newcommand{\cE}{{\cal E}}
\newcommand{\Proof}{\noindent\textbf{Proof. }}
\newcommand{\cH}{{\cal H}}
\newcommand{\ft}{\tau^*}
\begin{document}

\title{Optimal strategies in fractional games:\\
   vertex cover and domination
}
\author{Csilla Bujt\'as\,\thanks{Faculty of Mathematics and Physics,
 University of Ljubljana, Slovenia, and
University of Pannonia, Veszpr\'em, Hungary.
}\,\,, \ \
 G\"unter Rote\,\thanks{Institut f\"ur Informatik,
  Freie Universit\"at Berlin, Takustr.~9, 14195 Berlin, Germany.
  }\,\,, \ \
 Zsolt Tuza\,\thanks{Alfr\'ed R\'enyi Institute of Mathematics,
  1053 Budapest, Re\'altanoda u.~13--15, Hungary;
   and
  Department of Computer Science and Systems Technology,
   University of Pannonia, 8200 Veszpr\'em, Egyetem u.~10, Hungary.
   }}
\date{\small Latest update on \version}
\maketitle

\begin{abstract}

In a hypergraph $\cH=(V,\cE)$ with vertex set $V$ and edge set $\cE$,
 a real-valued function $f: V \to [0,1]$ is a fractional transversal if
 $\sum_{v \in E} f(v) \ge 1$ holds for every $E\in\cE$.
Its size is $|f|:=\sum_{v\in V} f(v)$, and the fractional transversal number
 $\ft(\cH)$ is the smallest possible $|f|$.
 
We consider a game scenario where two players with opposite goals
 construct a fractional transversal incrementally, trying to minimize and
  maximize $|f|$, respectively.
We prove that both players have strategies to achieve their common optimum,
 and they can reach their goals using rational weights.
 \vspace*{3ex}

\noindent {\small \textbf{Keywords:} fractional vertex cover,
  fractional transversal game,  fractional domination \vspace*{1ex} game.} 
\\
\noindent {\small \textbf{AMS subject classification:}  05C69, 05C65, 05C57}

 \end{abstract}

\newpage

\section{Introduction}  

Let $\cH=(V,\cE)$ be a finite hypergraph, where $V$ is the finite vertex set
 and  $\cE$ is the edge set, a set system over the underlying set $V$.
We assume that every edge contains at least one vertex; that is,
 $\cE \subseteq 2^{V}\setminus \{\es \}$.
A hypergraph is \emph{$k$-uniform} if $|E|=k$ holds for all $E\in\cE$.
A set $T\subseteq V$ is a \emph{transversal}\,\footnote{In various areas of
 discrete mathematics and computer science, the equivalent standard name of
  transversal is vertex cover, or hitting set, or blocking set.
It is also equivalent to the set cover in the dual hypergraph.}
  of $\cH$ if every edge is covered by a vertex of $T$, which formally
   means that $T\cap E\neq \es$ holds for all $E\in\cE$.
Its real relaxation, called \emph{fractional transversal}, is a
 function $f\colon V \to [0,1]$ such that
 $\sum_{v \in E} f(v) \ge 1$ holds for every $E\in\cE$.
 The size of $f$ is defined as $|f|:=\sum_{v\in V} f(v)$.
The \emph{transversal number}  $\tau(\cH)$ and the
\emph{fractional transversal number} $\ft(\cH)$ of $\cH$ are
 the minimum cardinality $|T|$ of a transversal and minimum value $|f|$ of a
fractional transversal,
respectively.

The \textit{transversal game}
is a competitive optimization version of hypergraph transversals, which
was introduced in~\cite{BHT-2016} and studied
further in~\cite{BHT-2017}.
It is played on a hypergraph $\cH$ by two players called Edge-hitter and Staller.
They take turns choosing a vertex.
The game is over when all edges are covered, and the length of the game
is the number of vertices chosen by the players.
Edge-hitter wants to finish the game as soon as possible, while
Staller wants to delay the end.
To prevent Staller from making completely useless moves, we stipulate
that the chosen vertex must be
contained in at least one
previously uncovered edge.

Assuming that both players play optimally and Edge-hitter starts, the
length of the game on $\cH$ is uniquely determined. It is called the
\textit{game transversal number} of $\cH$ and is denoted by $\tau_g(\cH)$.
Analogously, the \emph{Staller-start game transversal number} of $\cH$,
denoted by $\tau_g'(\cH)$, is the length of the game under the same rules
when Staller makes the first move. Among other results, it was proved in~\cite{BHT-2016} that $|\tau_g(\cH) - \tau_g'(\cH)| \le 1$ always holds. We further recall that, denoting by $n(\cH)$ and $m(\cH)$ the number of vertices and edges in $\cH$ respectively, $\frac{4}{11}(n(\cH)+m(\cH))$ is a (sharp) upper bound on $\tau_g(\cH)$ if $\cH$ does not contain one-element edges and it is not isomorphic to the cycle~$C_4$.

Below we shall refer to this game as the \textit{integer game},
as opposed to its fractional version which we will introduce
 in the next section.

Important motivation of this approach are the \emph{domination game}~\cite{BKR-2010} and the \emph{total domination game}~\cite{HKR-2015}, where in fact the transversal game is
 played on the `closed neighborhood hypergraph' and on the `open neighborhood
  hypergraph' of a graph, respectively.
Further variants studied so far include the
disjoint domination~\cite{BT-2016}, connected domination~\cite{borowiecki-2019}, and
 fractional domination~\cite{BT-fr} games on graphs, and the domination 
 games  on hypergraphs~\cite{BPTV}. Some of the most recent results  can be found in \cite{BBGK-2019, BDKK, bujtas-2018, BIK-2021, henning-2016, I-2019, kinnersley-2013, KR-2019, Kos-2017, XLK-2018}.  For a thorough survey and list of further references see the book \cite{BHKR-book}.

\subsection*{Our results}

In Section \ref{sec:fractransv} we introduce the rules of the game,
 prove that its value is well-defined, and present some examples.
The latter also show that it matters which player starts, moreover
 edges that are subsets of other edges of the hypergraph may influence the
 game value, which is not the case for the standard non-game version
 of transversal number.

In Section \ref{sec:contprinc} we compare the game transversal number
 with other related parameters, and prove a monotonicity property,
 implying that changing the starting player
can affect the value of the game by at most 1.

The rules of the game allow both players to split their moves into
 infinitely many submoves.
In Section \ref{sec:infinite} we prove that any infinite move is
 equivalent to a finite move.

In Section \ref{sec:algo} we prove that the game can be modeled in a way
 that leads to an optimization problem solvable via the theory of
 piecewise linear continuous rational functions.
From this we derive that the game value is rational for every finite
 hypergraph, moreover both players can achieve their goals using
 rational submoves.

Consequences on domination games and several conjectures are given in the
 concluding section.

\section{Fractional transversal game}
\label{sec:fractransv}

Let $\cH=(V,\cE)$ be a hypergraph. In the context of the fractional transversal game, we will consider a \emph{(partial) cover function}
  $t\colon V \to [0,1]$ that is updated after each move during the game.
  We denote by $|t|$ the sum $\sum_{v\in V}t(v)$.
Given a cover function~$t$, the corresponding \emph{load function} is $\ell\colon \cE \to [0,1]$ defined by the rule
  $$\ell(E)=\ell(E,t)=\min\{1, \sum_{v\in E} t(v)\}$$
  for every $E \in \cE$.
If $\ell\equiv 1$, we say that $\cH$ is fully covered. We shall write $t_i$ and $\ell_i$ for the cover and load functions obtained after the $i^{\rm th}$ move.

The game begins with $t_0\equiv 0$  and therefore with $\ell_0\equiv 0$.
 It is finished when the hypergraph becomes fully covered.
Edge-hitter and Staller take turns making moves under the following rules.
As long as $\ell\not\equiv 1$, the next player performs a
 \emph{move} which is a sequence $(v_{i_1},w_1),(v_{i_2},w_2), \dots $ of
arbitrary length (possibly infinite). It consists of the \emph{submoves} $(v_{i_k},w_k)$, $k=1,2,\dots,$ where $v_{i_1}, v_{i_2},\dots $ are vertices of $\cH$ with any number of repetitions allowed, and the \emph{weights}
 $w_1,w_2,\dots$ are real numbers from $[0,1]$.
  It is required that
$$\sum_{k\ge 1} w_k=1$$ in each move except the last one, whereas it is enough to have $\sum_{k\ge 1} w_k\le 1$ in the last move.
The $i^{\rm th}$ move $(v_{i_1},w_1),(v_{i_2},w_2), \dots $ is \emph{legal} if it satisfies the following condition:
\tmz 
\item[$(*)$]
For every $k \ge 1 $  there exists an edge $E\in\cE$ such that
 $v_{i_k} \in E$
and
$$\ell_{i-1}(E) + \Biggl(  \sum_{\substack{v_{i_s}\in E\\
 1\le s \le k-1}} w_s \Biggr) +w_k\le 1 \, .$$
\etmz
That is, for each legal submove there exists an edge $E$ such that its
load increases by exactly the weight in the submove.
This rule forces that no part of $w_k$ be wasted during the submove.

The cover function $t_{i}$ can gradually be reached from $t_{i-1}$
 by adding the weight $w_k$ to $t(v_{i_k})$ after each submove; this process
 converts also the corresponding load function from $\ell_{i-1}$ to $\ell_{i}$.

The \emph{value\/   $|\cG|$ of the game} is defined as the value $|t_q|$ of the cover function obtained at the end, that is the sum of the weights that have been spent during the game.
The goal of Edge-hitter is to achieve a value $|\cG|$ as small as possible, while Staller wants a large $|\cG|$.

Assuming that Edge-hitter starts the fractional transversal game on $\cH$, we consider the set of upper bounds,
 $$
   U_{\cH}=\{a: \mbox{Edge-hitter has a strategy that ensures } |\cG| \le a \}
 $$
 and the set of lower bounds,
 $$
   L_{\cH}=\{b: \mbox{Staller has a strategy that ensures } |\cG| \ge b \}.
 $$
 

Formally the \emph{game fractional transversal number} $\tau^*_g(\cH)$ is defined as
$$
\tau_g^*(\cH)=\inf (U_\cH) .
$$
The Staller-start game fractional  transversal number ${\tau_{g}^*}'(\cH)$
is defined similarly, under the condition that the first move is made by Staller.

The following assertion shows that $\tau^*_g(\cH)$ is in fact the common optimum for Edge-hitter and Staller, and so is ${\tau_{g}^*}'(\cH)$ as well. The proof is essentially the same as the one for the  game fractional domination number in \cite{BT-fr}.

\begin{prop}
	For every hypergraph\/ $\cH$ we have\/ $\inf (U_{\cH}) = \sup (L_{\cH})$, and the analogous equality holds for the Staller-start game, too.
\end{prop}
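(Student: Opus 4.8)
The plan is to establish the two inequalities $\sup(L_\cH)\le\inf(U_\cH)$ and $\sup(L_\cH)\ge\inf(U_\cH)$ separately. The first is routine: for any $a\in U_\cH$ and $b\in L_\cH$, let Edge-hitter follow a strategy witnessing $a$ and Staller follow one witnessing $b$, and let these two strategies play against each other; the resulting play has value $|\cG|$ with $b\le|\cG|\le a$, so $b\le a$, and hence $\sup(L_\cH)\le\inf(U_\cH)$. Moreover $U_\cH$ is closed upward and $L_\cH$ closed downward, and both are nonempty --- $0\in L_\cH$ always, while the value of any play never exceeds the number of its moves, which we bound below, so some large constant lies in $U_\cH$ --- hence both are half-lines and it remains only to exclude a gap between them. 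Equivalently, I must prove that the game is determined, i.e.\ that from every reachable game state the two players can enforce a common value; this I would do by backward induction on the (bounded) number of moves still to be played.

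The key preliminary is that every play is short. By the legality condition $(*)$, each nontrivial submove $(v_{i_k},w_k)$ raises the load of its witnessing edge by exactly $w_k$, so a full move (of total weight $1$) raises $\sum_{E\in\cE}\ell(E)$ by at least $1$; since this sum starts at $0$, never exceeds $m(\cH)$, and equals $m(\cH)$ precisely when the game ends, at most $m(\cH)+1$ moves are made in any play. Accordingly I attach to each reachable \emph{position} --- a pair $(\ell,P)$ formed by a load function $\ell$ and the player $P$ to move --- a nonnegative integer $N(\ell):=0$ if $\ell\equiv1$ and $N(\ell):=\lceil m(\cH)-\sum_{E}\ell(E)\rceil+1$ otherwise. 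A move either finishes the game --- reaching a position with $N=0$ --- or is a full move whose outcome still has $\ell\not\equiv1$; in the latter case $\sum_E\ell(E)$ is below $m(\cH)$ both before and after and rises by at least $1$, so $N$ drops by at least $1$. Thus $N$ strictly decreases along every move. Also, from any non-terminal position a legal move exists: pour small weights onto a vertex of some uncovered edge until either weight $1$ has been spent or all edges become covered.

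Now define recursively, using induction on $N(\ell)$, a value $\mathrm{val}(\ell,P)$. At terminal positions $\mathrm{val}:=0$. Otherwise, letting $\mathcal R(\ell)$ denote the nonempty set of pairs $(\ell',w)$ for which there is a legal move from $\ell$ spending total weight $w$ and producing $\ell'$, put
\begin{align*}
\mathrm{val}(\ell,E) &= \inf_{(\ell',w)\in\mathcal R(\ell)}\bigl(w+\mathrm{val}(\ell',S)\bigr),\\
\mathrm{val}(\ell,S) &= \sup_{(\ell',w)\in\mathcal R(\ell)}\bigl(w+\mathrm{val}(\ell',E)\bigr).
\end{align*}
Since the right-hand sides refer only to positions of smaller $N$ and all values lie in $[0,m(\cH)+1]$, this is a sound definition. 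I then claim, by a second induction on $N(\ell)$, that from every position $(\ell,P)$ and every $\eps>0$, Edge-hitter has a strategy ensuring that the weight spent from then on is at most $\mathrm{val}(\ell,P)+\eps$, while Staller has a strategy ensuring it is at least $\mathrm{val}(\ell,P)-\eps$. For the inductive step at a position $(\ell,E)$: to get Edge-hitter's bound, Edge-hitter plays a move reaching some $(\ell',S)$ with $w+\mathrm{val}(\ell',S)\le\mathrm{val}(\ell,E)+\eps/2$ and from there follows an inductively guaranteed strategy with error $\eps/2$; to get Staller's bound, every move Edge-hitter can make reaches some $(\ell',S)$ with $w+\mathrm{val}(\ell',S)\ge\mathrm{val}(\ell,E)$ by the definition of the infimum, and Staller then follows an inductively guaranteed strategy from $(\ell',S)$ with error $\eps$. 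The step at a position $(\ell,S)$ is symmetric. Evaluating the claim at the initial position $(\ell_0,E)$ gives $\inf(U_\cH)\le\mathrm{val}(\ell_0,E)$ and $\sup(L_\cH)\ge\mathrm{val}(\ell_0,E)$; together with the easy inequality this forces $\inf(U_\cH)=\sup(L_\cH)=\mathrm{val}(\ell_0,E)$. The Staller-start assertion follows in exactly the same manner, starting instead from $(\ell_0,S)$.

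The step I expect to be the main obstacle is the analysis of a single move, which the rules permit to consist of infinitely many submoves: one must check that $\mathcal R(\ell)$ is nonempty, that it really captures all one-move continuations (an infinite move has a well-defined limit position, since the weights of a non-final move sum to $1$), and that a move within $\eps/2$ of the infimum (resp.\ supremum) can genuinely be realized as a legal sequence of submoves --- for instance by distributing the intended net weight in portions whose accumulated unused slack is kept below $\eps/2$ through a geometric schedule. This is precisely the point at which the argument runs parallel to the proof for the game fractional domination number in~\cite{BT-fr}, and it uses nothing about the later fact, proved in Section~\ref{sec:infinite}, that an infinite move can always be replaced by a finite one.
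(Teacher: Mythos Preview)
Your argument is correct, and it is genuinely more explicit than the paper's. The paper disposes of the inequality $\inf(U_\cH)\le\sup(L_\cH)$ in two lines: if $z<\inf(U_\cH)$ then $z\notin U_\cH$, so Edge-hitter has no strategy ensuring $|\cG|\le z$; the paper then asserts directly that ``against each strategy of Edge-hitter there is a strategy of Staller which results in $|\cG|>z$'' and that ``we may infer $z\in L_\cH$''. That last inference is a quantifier swap---from $\forall\sigma_E\,\exists\sigma_S$ to $\exists\sigma_S\,\forall\sigma_E$---which is exactly the determinacy of the game and is not automatic for games with continuum-many options per move; the paper delegates its justification to~\cite{BT-fr}. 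Your proof, by contrast, supplies the determinacy argument in full: you bound the number of moves by a potential (the total residual load), set up a rank $N(\ell)$ that strictly decreases along every move, define the value $\mathrm{val}(\ell,P)$ by backward induction on $N$, and then build $\eps$-optimal strategies for both players by a second induction. This is the standard Zermelo-type argument adapted to a game with bounded depth but uncountably many options, and it goes through cleanly here. As a bonus it produces the position-dependent value function and the near-optimal strategies explicitly, which the paper only obtains later, in Section~\ref{sec:algo}, via the structured game and the PLCR analysis.

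One small remark on your closing paragraph: your worry about realizing a near-infimum move as a legal sequence of submoves is unnecessary. You defined $\mathcal R(\ell)$ as the set of pairs $(\ell',w)$ for which a legal move \emph{already exists}; hence any element of $\mathcal R(\ell)$ within $\eps/2$ of the infimum comes equipped with a witnessing legal move by definition, and no separate ``geometric schedule'' construction is needed. The only genuine obligations are that $\mathcal R(\ell)$ is nonempty (which you verified) and that an infinite legal move has a well-defined limit load (which follows from monotonicity and the summability of the weights).
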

\Proof
First, assume that $\inf (U_{\cH}) < \sup (L_{\cH})$ and consequently, there exist two reals $x$ and $y$ satisfying $\inf (U_{\cH}) < x < y <\sup (L_{\cH})$. By definition, $x \in U_{\cH}$ and, therefore, Edge-hitter can ensure that, under every strategy of Staller, the value of the game is at most $x$.  Similarly, $y \in L_{\cH}$ and Staller has a strategy that ensures  $|\cG|\ge y$ whatever strategy is followed by Edge-hitter.
This is a contradiction that establishes  $ \inf (U_{\cH}) \ge \sup (L_{\cH})$.

Now, we prove the reverse inequality. By definition,  $z < \inf (U_{\cH})$ implies that Edge-hitter does not have a strategy to achieve $|\cG| \le z$.
That is, against each strategy of Edge-hitter there is a strategy of Staller which results in $|\cG| > z$. We may infer that $z\in L_{\cH}$ and therefore  $z\le \sup (L_{\cH})$. Since it holds for every $z< \inf (U_{\cH})$, we conclude $ \inf (U_{\cH}) \le \sup (L_{\cH})$.
This completes the proof of the proposition.
\qed

\bsk

Later, in Section~\ref{sec:algo}, we will show that $\inf (U_{\cH}) =  \min (U_{\cH})$ and $\sup (L_{\cH}) =  \max (L_{\cH})$. Therefore,  Edge-hitter and Staller have optimal strategies under which, respectively, $|\cG| \le \tau^*_g(\cH)$ and $|\cG| \ge \tau^*_g(\cH)$ are achieved.


\paragraph{Examples for the fractional transversal game.}
Our first example is the $4$-cycle $C_4:v_1v_2v_3v_4v_1$ which can also be considered as a $2$-uniform hypergraph.
Remark that $\tau^*(C_4)=2$ and we have $\tau_g(C_4)=3$ and $\tau'_g(C_4)=2$ for the integer games where Edge-hitter and Staller starts, respectively.
It can be proved that $\tau_g^*(C_4)=5/2$ and Edge-hitter has a strategy to ensure that the sum of the weights spent during the game is at most $5/2$.
One possible game under optimal strategies is the following.
The first move of Edge-hitter is $(v_1,\frac{1}{4}), (v_2,\frac{1}{4}), (v_3,\frac{1}{4}), (v_4,\frac{1}{4})$ that results $\ell_1 \equiv \frac{1}{2}$.
Then, Staller replies by playing $(v_2,\frac{1}{2}), (v_3,\frac{1}{2})$ that gives $\ell_2(v_iv_{i+1})=1$ for $i = 1,2,3$ while $\ell_2(v_4v_{1})$ remains $\frac{1}{2}$.
Then, Edge-hitter has to spend a weight $\frac{1}{2}$, for example by playing $(v_1,\frac{1}{2})$, to finish the game.

If Staller starts the fractional transversal game on $C_4$ with the move $(v_1,w_1), (v_2,w_2),\\ (v_3,w_3), (v_4,w_4)$ (or with any permutation of these submoves), then Edge-hitter can always ensure $|\cG|=2$ by playing  $(v_1,w_3), (v_2,w_4), (v_3,w_1), (v_4,w_2)$. Indeed, $\ell_2$ assigns $w_1+w_2+w_3+w_4=1$ to every edge of the graph. Therefore, ${\tau_{g}^*}'(C_4) \le 2$. Since ${\tau_{g}^*}'(C_4) \ge \tau^*(C_4)=2$ also holds, we get ${\tau_{g}^*}'(C_4)=2$. 

Now we modify the previous example $C_4$ by adding four new vertices $u_1, \dots, u_4$ and four new edges $\{v_1,v_{2}, u_1\}, \dots, \{v_4,v_1, u_4\}$ to get the hypergraph $\cH$. When the (integer or) fractional transversal number is considered each edge that is a superset of another edge can be deleted. This gives $\tau^*(\cH)=\tau^*(C_4)=2$. We show that the situation is different for the fractional transversal game on $\cH$.  Again, an optimal first move for Edge-hitter is  $(v_1,\frac{1}{4}), (v_2,\frac{1}{4}), (v_3,\frac{1}{4}), (v_4,\frac{1}{4})$, but here Staller may respond by playing $(u_1,\frac{1}{2}), (u_3,\frac{1}{2})$ that leaves $\ell_2(\{v_2,v_3,u_2\})= \ell_2(\{v_4,v_1,u_4\})=\frac{1}{2}$ and forces Edge-hitter to spend a weight of $1$ in his next move. It can be proved that the value $|\cG|=3$ equals $\tau^*_g(\cH)$ and therefore, $\tau^*_g(\cH)\neq \tau^*_g(C_4)$.

\section{Some basic facts and the Continuation Principle}
\label{sec:contprinc}

In this section we first observe some simple inequalities which are analogous to
 the ones in other games concerning graph domination and hypergraph transversal,
 most notably to the fractional domination game \cite{BT-fr}.

\begin{prop}  \label{prop:bounds} ~~~
	\begin{itemize}
        \item[$(i)$] For every hypergraph $\cH$, it holds that
        $$\tau^* (\cH) \le \tau_g^*(\cH) < 2 \tau^* (\cH) \qquad \mbox{and}  \qquad 
        \tau^* (\cH) \le {\tau_{g}^*}'(\cH) < 2 \tau^* (\cH)+1.$$
        \item[$(ii)$] There is no universal constant\/ $C$ with
		  $\tau_g(\cH) \le C\cdot \tau^*_g(\cH)$, and not even with\/
		   $\tau(\cH) \le C\cdot \tau^*_g(\cH)$.
		 The same holds true for\/ ${\tau_{g}^*}'(\cH)$, too.
			\end{itemize}
\end{prop}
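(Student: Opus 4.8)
The plan is to prove $(i)$ by giving Edge-hitter a potential-function strategy, and $(ii)$ by a single family of examples. The two lower bounds in $(i)$ are immediate: when the game ends, $\ell\equiv 1$ means $\sum_{v\in E}t_q(v)\ge 1$ for every $E\in\cE$, so the final cover function $t_q$ is a fractional transversal and $|\cG|=|t_q|\ge\tau^*(\cH)$. Since every play of either game therefore ends with value at least $\tau^*(\cH)$, we get $\tau^*(\cH)\le\tau_g^*(\cH)$ and $\tau^*(\cH)\le{\tau_{g}^*}'(\cH)$. For the upper bounds I attach to each cover function $t$ the \emph{residual cost}
\[
\rho(t):=\min\Bigl\{\,|x|\ :\ x\colon V\to[0,1],\ \ x(v)\le 1-t(v)\ (v\in V),\ \ \textstyle\sum_{v\in E}\bigl(t(v)+x(v)\bigr)\ge 1\ (E\in\cE)\,\Bigr\},
\]
the cheapest way to complete the cover from position $t$, and use it as a potential for Edge-hitter.

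I would establish three facts. (a) $\rho(t_0)=\tau^*(\cH)$, since for $t_0\equiv 0$ the displayed program is exactly the one defining $\tau^*(\cH)$. (b) $\rho$ is non-increasing as weight is added: capping the coordinates of an optimal solution for $t$ into the (tighter) box of $t'\ge t$ keeps all constraints satisfied and does not increase $|x|$, so $\rho(t')\le\rho(t)$; in particular no move, and in particular no move of Staller, raises $\rho$. (c) On Edge-hitter's turn he can, spending weight exactly $1$, reach a position $t'$ with $\rho(t')\le\rho(t)-1$ when $\rho(t)>1$, and finish the game in one move of weight $\rho(t)\le 1$ when $0<\rho(t)\le 1$. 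For this he fixes an optimal solution $x$ for $\rho(t)$ (chosen with $x(v)\le 1-t(v)$, which keeps $t+x$ a valid cover function) and plays a piece $x''\le x$ of total weight $\min\{|x|,1\}$; since $t'+(x-x'')=t+x$ already covers $\cH$, the vector $x-x''$ witnesses $\rho(t')\le\rho(t)-1$, and when $|x|\le 1$ playing all of $x$ finishes the game. Each submove $(v,w)$ with $0<w\le x(v)$ is legal: optimality of $x$ forces a still-uncovered edge $E\ni v$ with $\sum_{v'\in E}\bigl(t(v')+x(v')\bigr)=1$, and along such an $E$ the total weight accumulated during the move never exceeds $\sum_{v'\in E}x(v')=1-\ell_{i-1}(E)$, which is precisely condition $(*)$.

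The counting is then routine. In the Edge-hitter-start game $\rho$ starts at $\tau^*(\cH)$ by (a), never rises by (b), and drops by at least $1$ at each of Edge-hitter's moves by (c); so before his $j$-th move $\rho\le\tau^*(\cH)-(j-1)$, and he can finish by his $j_0$-th move with $j_0=\lceil\tau^*(\cH)\rceil$. The total weight spent is then at most $(2j_0-2)\cdot 1+\bigl(\tau^*(\cH)-(j_0-1)\bigr)=\tau^*(\cH)+(j_0-1)<2\tau^*(\cH)$, using $\lceil\tau^*(\cH)\rceil-1<\tau^*(\cH)$ (we may assume $\cE\neq\es$, so $\tau^*(\cH)\ge 1$). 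In the Staller-start game, Staller's first move has weight at most $1$ and, by (a)--(b), leaves a position $t_1$ with $\rho(t_1)\le\tau^*(\cH)$; running the same Edge-hitter strategy afterwards, the analogous count with one extra leading move of weight $\le 1$ gives total weight at most $\lceil\rho(t_1)\rceil+\tau^*(\cH)<2\tau^*(\cH)+1$. As $\tau_g^*(\cH)=\inf U_\cH$ (and similarly for the Staller-start game) and these bounds are guaranteed by Edge-hitter, the strict inequalities of $(i)$ follow.

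For $(ii)$, since the vertices chosen in the integer game form a transversal we have $\tau(\cH)\le\tau_g(\cH)$, so it suffices to exhibit, for every constant $C$, a hypergraph with $\tau(\cH)>C\,\tau_g^*(\cH)$ and $\tau(\cH)>C\,{\tau_{g}^*}'(\cH)$. Let $\cH_k$ be the complete $k$-uniform hypergraph on a set of $2k-1$ vertices. The complement of any transversal has at most $k-1$ vertices (otherwise it contains an edge), so $\tau(\cH_k)=k$; on the other hand the constant function $\tfrac1k$ is a fractional transversal, so $\tau^*(\cH_k)<2$, and hence by part $(i)$ we get $\tau_g^*(\cH_k)<4$ and ${\tau_{g}^*}'(\cH_k)<5$. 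Therefore $\tau(\cH_k)/\tau_g^*(\cH_k)$, $\tau_g(\cH_k)/\tau_g^*(\cH_k)$ and their primed analogues all exceed $k/5$, which is unbounded in $k$. The one genuinely delicate point in this plan is the legality and in-range bookkeeping of step (c): choosing $x$ with $x(v)\le 1-t(v)$ (feasible because $\ell_{i-1}(E)\ge t(v)$ whenever $v\in E$) and verifying condition $(*)$ submove by submove; everything else is the elementary potential count above or the trivial lower bound.
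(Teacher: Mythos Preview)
Your argument is correct. For part $(i)$ your potential $\rho(t)$ repackages exactly the strategy the paper uses: there, Edge-hitter fixes a single optimal fractional transversal $f$ with $|f|=\tau^*(\cH)$ once and for all and in every move plays the largest legal submoves subject to $t(v)\le f(v)$; this gives his total weight $W\le\tau^*(\cH)$ directly, and the bound on Staller's total follows by move-interleaving. Your version re-optimises the residual after every Staller move, which is not needed for the bound but does no harm; the legality check via a tight edge and the counting $(j_0-1)+\tau^*(\cH)<2\tau^*(\cH)$ are the same in spirit and in outcome.

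For part $(ii)$ you take a genuinely different and more self-contained route. The paper invokes Alon's theorem on $k$-uniform hypergraphs with $\tau(\cH)\ge(1-\varepsilon)\tfrac{\ln k}{k}\,(|V|+|\cE|)$, and then compares with the trivial bound $\tau^*(\cH)\le|V|/k$ to get a ratio growing like $\ln k$. Your family $\cH_k=\binom{[2k-1]}{k}$ gives $\tau(\cH_k)=k$ while $\tau^*(\cH_k)=(2k-1)/k<2$, so $(i)$ immediately bounds $\tau_g^*(\cH_k)$ and ${\tau_g^*}'(\cH_k)$ by absolute constants and the ratio blows up linearly in $k$. This is a cleaner witness: it avoids the external citation, yields a faster-growing gap, and the verification is a one-line pigeonhole. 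The paper's construction, on the other hand, lives among hypergraphs with bounded edge size relative to the number of edges, which your $\cH_k$ does not; so the two examples illustrate the phenomenon in different regimes.
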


\Proof
No matter which player starts the game, at the end the cover function
 $t_q$ is a fractional transversal.
This implies the lower bounds $\tau_g^*(\cH) \ge \tau^* (\cH)$ and
 ${\tau_{g}^*}'(\cH) \ge \tau^* (\cH)$.

Concerning a fractional transversal game $\cG$ on $\cH$ and the upper bounds in $(i)$,  we can write the value of the
 game in the form $|\cG|=W+W'$, where $W$ and $W'$ denote the total sum of weights
 assigned by Edge-hitter and Staller, respectively.
To keep the claimed bounds, first Edge-hitter can fix an optimal
 fractional transversal $f$, i.e.\ one with $|f|= \tau^*(\cH)$.
After that, in his moves he can apply the strategy to play submoves $(v_{i_j}, w_j)$
 with the largest possible weights $w_j$ which are not only allowed by $(*)$
  but also respect the inequalities $t_{i-1}(v_{i_j})+w_j \le f(v_{i_j})$.
If such a legal submove with a positive weight does not exist anymore, then $\cH$ is fully covered and the game is finished.

This strategy yields $W\le \tau^*(\cH)$, with strict inequality if
 the game is finished by Staller.
We also have $W'\le W$ or $W'\le W+1$, depending on whether the first
 move is made by Edge-hitter or Staller, both with strict inequalities if
 the game is finished by Edge-hitter.
Since only one of the players can make the last move, the claimed strict upper
 bounds follow.

For the proof of $(ii)$ we apply the following result of Alon \cite{A-1990}:
 For every fixed $\epsilon>0$ there is an integer $k$ and a $k$-uniform
  hypergraph $\cH=(V,\cE)$  such that  $\tau(\cH) \geq (1-\epsilon)\frac{\ln k}{k} (|V|+|\cE|)$.
On the other hand, a very simple fractional transversal $f$ with $|f|=|V|/k$
 may be constructed by assigning $f(v)=1/k$ to each vertex $v\in V$.
Therefore, $\tau^*(\cH) \le \frac{|V|}{k}$ and we obtain
  $$
    (1/2-\epsilon) \ln k < \sup_{\cH} \frac{\tau(\cH)}{2\tau^*(\cH)}
     < \sup_{\cH} \frac{\tau(\cH)}{\tau_g^*(\cH)}
     \leq \sup_{\cH} \frac{\tau_g(\cH)}{\tau_g^*(\cH)}
  $$
  due to the obvious fact $\tau \leq \tau_g$ and the inequality
   $\tau^*_g < 2\tau^*$ from $(i)$.
For ${\tau_{g}^*}'(\cH)$ the proof is similar, by the second part of $(i)$.
 \qed

\medskip  
 
\begin{prop}
 The upper bounds in Proposition~\ref{prop:bounds} $(i)$ are tight
  apart from an additive constant at most 2.
\end{prop}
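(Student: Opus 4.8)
The plan is to prove tightness by constructing, for every $n$, a hypergraph $\cH_n$ with $\tau^*(\cH_n)=n$ on which Staller can force the game value to be at least $2n-1$, both when Edge-hitter starts and when Staller starts. Since this gives $\tau_g^*(\cH_n)\ge 2n-1=2\tau^*(\cH_n)-1$ and ${\tau_g^*}'(\cH_n)\ge 2n-1=(2\tau^*(\cH_n)+1)-2$, it shows that neither the coefficient $2$ nor anything better than an additive constant $2$ can be shaved from the bounds of Proposition~\ref{prop:bounds}$(i)$. I would take $\cH_n$ to be a matching $M_i=\{a_i,b_i\}$, $i=1,\dots,n$, on $2n$ vertices, together with a big bundle of pendant edges $P_{ij}=\{a_i,b_i,c_{ij}\}$, $1\le j\le N$, where $N$ is chosen large enough (any $N\ge 2n+1$ works) and the $c_{ij}$ are new, pairwise distinct vertices. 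Because each $P_{ij}$ contains $M_i$, the pendant edges are redundant for fractional transversals, so $\tau^*(\cH_n)=\tau^*(\{M_1,\dots,M_n\})=n$ (lower bound by summing over the pairwise disjoint $M_i$, upper bound by placing weight $1$ on each $a_i$); and the game ends precisely when every $M_i$ is covered, since covering $M_i$ covers every $P_{ij}$ as well.

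For the lower bound I would have Staller play the ``pure waste'' strategy: never touch an $a_i$ or $b_i$, and on each turn spend her whole unit of weight on the vertices $c_{ij}$, distributed thinly. The heart of the matter is to show that she can do this on every turn, with the single possible exception of the game's last (necessarily short) move. So suppose $|\cG|<2n$ (otherwise we are done), so that $\sum_{i,j}t(c_{ij})\le|\cG|<2n$ at all times. If at some point the game is not over and the still-uncovered matching edges require a total further weight $W_{\mathrm{rem}}=\sum_{i\ \mathrm{uncov.}}(1-t(a_i)-t(b_i))$ on the $a,b$-vertices, then the pendants sitting over those edges provide total free room at least $N\,W_{\mathrm{rem}}-\sum_{i,j}t(c_{ij})>N\,W_{\mathrm{rem}}-2n$, which is at least $1$ whenever $W_{\mathrm{rem}}\ge 1$. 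Hence on every turn with $W_{\mathrm{rem}}\ge 1$ Staller can legally burn a full unit of weight into the bundle; the only turn on which this might fail is one where $W_{\mathrm{rem}}<1$, which is necessarily the last move of the game.

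Granting this, the bookkeeping is short. Since Staller supplies at most the weight of one short move to the $a,b$-vertices, Edge-hitter must supply more than $n-1$ of the at least $n$ units of weight the disjoint matching forces there, so he makes at least $n$ moves; as he moves first, Staller makes at least $n-1$ moves, and an easy distinction according to whether the last move belongs to Edge-hitter or to Staller shows that at least $n-1$ of Staller's moves are full weight-$1$ waste moves. Therefore the weight on the $c_{ij}$ is at least $n-1$, and $|\cG|=(\text{weight on the }a,b\text{-vertices})+(\text{weight on the }c_{ij})\ge n+(n-1)=2n-1$. Repeating the argument with Staller to move first (she can again waste on all but possibly the final move) gives ${\tau_g^*}'(\cH_n)\ge 2n-1$; even a coarser count never loses more than an additive $2$ against $2\tau^*$ here, which is all that is needed.

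The delicate part is the claim that Staller is never forced off the waste strategy before the last move; concretely, one must control the free room in the pendant bundle against arbitrary play of Edge-hitter. Edge-hitter can shrink that room only in two ways, and the crude estimate above is tailored to defeat both: honest progress on an $M_i$ eventually kills all $N$ of its pendants, but that can happen at most $n$ times in total; and spending weight directly on the $c_{ij}$ is hopeless, since there are $nN\gg n$ pendants but barely $2n$ units of weight in the whole game. Once that room estimate is made rigorous, the remaining work — the last-move case analysis and the Edge-hitter-start versus Staller-start comparison — is routine and produces the promised additive constant at most $2$.
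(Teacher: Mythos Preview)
Your construction and strategy are correct and yield the claimed bound $\tau_g^*(\cH_n)\ge 2n-1$ (and in fact a careful run of your own argument gives ${\tau_g^*}'(\cH_n)\ge 2n$, sharper than you state for the Staller-start case). The one spot where the write-up is slightly loose is the use of the a~posteriori quantity $|\cG|$ to bound $\sum_{i,j} t(c_{ij})$ \emph{during} the game. A cleaner line avoids this: before Staller's $k$-th move only $2k-1$ units of weight have been spent (all earlier moves are non-final, hence full), so $\sum_{i,j} t(c_{ij})\le 2k-1\le 2n-3$ for $k\le n-1$, and your free-room estimate then gives $\ge N-(2n-3)\ge 4$. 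This makes the induction ``Staller's first $n-1$ moves are full waste, and the game survives until Edge-hitter's $n$-th move'' go through without the mildly circular ``suppose $|\cG|<2n$'' framing. The final accounting $|\cG|\ge (\text{weight on }a,b)+S_c\ge n+(n-1)$ is then immediate.

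The paper takes a quite different and shorter route: it uses $K_{k,k^2}$ as a $2$-uniform hypergraph with $\tau^*=k$, and lets Staller always play in the large vertex class. Each Staller move then raises the total edge-load by at most $k$, each Edge-hitter move by at most $k^2=\Delta$, so after $2k-2$ moves the load sum is at most $(k-1)k+(k-1)k^2=k^3-k<|\cE|$ and the game is not over, whence $\tau_g^*>2k-2$. This is a one-line global load-counting trick with no free-room bookkeeping or case analysis. Your approach, by contrast, makes the ``waste'' mechanism explicit through auxiliary pendant vertices that are invisible to $\tau^*$; this is more laborious but has the merit of exhibiting directly how superset edges give Staller room to stall --- the very phenomenon the paper only illustrates informally with its $C_4$-plus-triangles example.
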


\Proof
Consider the complete bipartite graph $G=K_{k,k^2}$ on $k+k^2$ vertices
 as a \hbox{$2$-uniform} hypergraph.
Clearly, $\tau^*(G)=k$. 
In any submove of a fractional transversal game, while $G$ is not fully covered, Staller can always select a vertex from the bigger partite class.
Following this strategy, during $k-1$ moves, Staller increases the sum of the loads by at most $(k-1)k$.
As $\Delta(G)=k^2$, $k-1$ moves of Edge-hitter increase the loads
 by at most $(k-1)k^2$.
Hence, no matter whether Edge-hitter or Staller starts the game, after $2k-2$ moves we have
 $$
   \sum_{E \in \cE} \ell_{2k-2}(E) \le (k-1)k+(k-1)k^2=k^3-k < |E| \,,
 $$
therefore the game is not over yet.
This shows $\tau_g^*(G) >2k-2= 2\tau^*(G) -2$ and, similarly,
 ${\tau_{g}^*}'(\cH)\ge 2\tau^*(G) -1$ follows if Staller starts the game.
 \qed

\medskip  

A monotone property of the game fractional transversal number is expressed
in the following idea, which provides a useful tool in simplifying
several arguments.
Let a hypergraph $\cH$ with a pre-defined load function $\ell$ be given,
 which we consider as a non-zero starting configuration.
We ask about the value $|\cG|$ of the game started by Edge-hitter,
 where the game is finished when $\ell$ is completed to a load function
 under which $\cH$ is fully covered.
The rules are the same as they were in the case of $\ell_0\equiv 0$,
 but here we have $\ell_0=\ell$, while we still start with $t_0\equiv 0$.
Assuming that both players play optimally, the value of the game will be
 denoted by $\tau_g^*(\cH| \ell)$ and termed the \emph{game\/ fractional\/
 $\ell$-transversal number}.
The corresponding Staller-start value ${\tau_{g}^*}'(\cH| \ell)$ is defined analogously.

\begin{thm}   \label{t:contpr}
	If\/ $\ell$ and\/ $\ell'$ are load functions on\/ the hypergraph $\cH=(V,\cE)$
	 such that\/ $\ell(E) \le \ell'(E)$ holds for every\/ $E\in \cE$, then\/
	$\tau_g^*(\cH| \ell)\ge \tau_g^*(\cH| \ell')$, and similarly\/
	${\tau_{g}^*}'(\cH| \ell)\ge {\tau_{g}^*}'(\cH| \ell')$.
\end{thm}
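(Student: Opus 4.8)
The plan is to prove this by the standard "strategy-stealing" / imagination-strategy device used throughout domination-game theory: we fix the starting player (say Edge-hitter; the Staller-start case is verbatim the same with the roles in the move order swapped), and we show that Edge-hitter, playing the game with the larger starting load $\ell'$, can do at least as well as he does with the smaller load $\ell$, and symmetrically Staller, playing with $\ell$, can guarantee at least what she guarantees with $\ell'$. Since by the Proposition the value is the common optimum for both players, it suffices to exhibit one such one-sided simulation. Concretely, I would let Edge-hitter play an auxiliary ("imagined") game $\cG_1$ with starting load $\ell'$ while mirroring moves into a real game $\cG_2$ with starting load $\ell$, maintaining the invariant that after each completed pair of moves the real load function $\ell^{(2)}$ is pointwise $\le$ the imagined load function $\ell^{(1)}$, and that the total weight spent so far in the real game does not exceed that spent in the imagined game.

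The key steps, in order. First, set up the coupling: in $\cG_1$ Edge-hitter follows an optimal strategy (or a near-optimal one achieving value $\le \tau_g^*(\cH\mid\ell') + \eps$, to sidestep the not-yet-proved attainment of the infimum). Second, describe how a move is copied from $\cG_1$ to $\cG_2$: when Edge-hitter makes a submove $(v,w)$ in $\cG_1$, he wants to make the "same" submove in $\cG_2$; the subtlety is that the legality condition $(*)$ and the "no weight wasted" rule refer to the current loads, which differ between the two games. Because $\ell^{(2)}\le\ell^{(1)}$ holds as an invariant, any edge certifying legality of a submove in $\cG_1$ has even more slack in $\cG_2$, so one shows the submove — possibly with a reduced weight $w'\le w$ to respect the "exactly one edge fills to capacity or below" constraint — is legal in $\cG_2$; here one may have to re-split a single imagined submove into several real submoves. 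Third, handle Staller's moves in the opposite direction: whatever submove $(v,w)$ Staller plays in the real game $\cG_2$, Edge-hitter copies it (again possibly truncated) into the imagined game $\cG_1$; the invariant $\ell^{(2)}\le\ell^{(1)}$ is what must be checked to survive this step, using that $t^{(2)}(v)$ may exceed $t^{(1)}(v)$ only on vertices where some incident edge was already saturated in $\cG_1$. Fourth, observe that the real game $\cG_2$ terminates no later than $\cG_1$: once $\cG_1$ is fully covered, $\ell^{(1)}\equiv 1 \ge \ell^{(2)}$ is impossible unless $\ell^{(2)}\equiv 1$ too — wait, that inequality goes the wrong way, so in fact what one gets is that $\cG_2$ may run longer, and the extra weight Edge-hitter must still spend in $\cG_2$ is bounded; the clean way is instead to maintain $\ell^{(1)}\le\ell^{(2)}$...

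Let me restate the correct direction of the coupling, since this is the main obstacle and it must be gotten right: the game with the \emph{larger} load $\ell'$ should be the \emph{easier} one for Edge-hitter, so we want to show $\tau_g^*(\cH\mid\ell)\ge\tau_g^*(\cH\mid\ell')$ by having Edge-hitter, in the \emph{harder} real game $\cG$ with load $\ell$, simulate his optimal strategy from the easier imagined game with load $\ell'$, while the invariant maintained is $\ell^{\mathrm{real}}(E)\ge\ell^{\mathrm{imag}}(E)$ for all $E$ (real is always at least imagined), together with (weight spent in real) $\le$ (weight spent in imagined) $+ |\ell'|_{\mathrm{excess}}$ accounting. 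One shows: (a) the invariant is preserved because copying Edge-hitter's submoves keeps equality of increments where possible and only increases the real load faster where the imagined edge hit its cap; (b) when Staller moves in the real game, Edge-hitter imagines the identical submove, and since saturated edges in the real game are a superset of those in the imagined game, Staller's imagined move is at least as "effective" there; (c) when the imagined game ends (all imagined loads $=1$), all real loads are also $=1$, so the real game is over, and the total real weight is $\le$ the total imagined weight. Optimality in the imagined game then gives $|\cG^{\mathrm{real}}|\le\tau_g^*(\cH\mid\ell')+\eps$ against any Staller, hence $\tau_g^*(\cH\mid\ell)\le\tau_g^*(\cH\mid\ell')$ — still the wrong inequality, which tells me the roles of "which game Edge-hitter controls" and "which inequality you get" are exactly opposite to my first guess, and that the clean route is the symmetric one: let \emph{Staller} do the simulation. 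Staller, in the real game with the \emph{smaller} load $\ell$, copies her optimal strategy from the imagined game with load $\ell'$; the invariant is again $\ell^{\mathrm{real}}\le\ell^{\mathrm{imag}}$, so the real game lasts at least as long, and Staller forces real weight $\ge$ imagined weight $\ge \tau_g^*(\cH\mid\ell')-\eps$, giving $\tau_g^*(\cH\mid\ell)\ge\tau_g^*(\cH\mid\ell')$ as desired. I would write the Staller-simulation version. The main obstacle — worth spelling out carefully in the actual proof — is the bookkeeping for submoves: reconciling the "$\sum w_k = 1$ per move" constraint, the legality condition $(*)$, and the differing load functions forces one to allow a single submove in one game to be realized as a (finite or countable) sequence of submoves in the other, and one must verify that the total weight per move is still exactly $1$ (or $\le 1$ on the last move) on both sides, and that condition $(*)$'s "exactly this edge fills up" requirement is met submove-by-submove; invoking Section~\ref{sec:infinite} (every infinite move is equivalent to a finite one) keeps this from causing trouble at the move-counting level. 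The Staller-start statement follows by the identical argument with the parity of the move sequence shifted.
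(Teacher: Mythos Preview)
Your final approach---Staller plays the real game on $\cH|\ell$ while maintaining an imagined game on $\cH|\ell'$ in which she follows a near-optimal strategy, with the invariant $\ell^{\mathrm{real}}\le\ell^{\mathrm{imag}}$ throughout---is correct and is exactly the imagination-strategy argument the paper uses. The paper packages it as a proof by contradiction (assume $\tau_g^*(\cH|\ell)<x<\tau_g^*(\cH|\ell')$, let Edge-hitter play optimally in Game~1 on $\cH|\ell$ and Staller optimally in Game~2 on $\cH|\ell'$, copy moves while preserving $\ell^{(1)}\le\ell^{(2)}$, and reach $x>t_1\ge t_2>x$), but the mechanism---copying Staller's moves from the $\ell'$-game to the $\ell$-game where they remain legal, and truncating Edge-hitter's moves when copied to the $\ell'$-game (noting that truncation only occurs when the relevant edges are already saturated there)---is identical to yours.

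Two remarks. First, excise the two false starts: a reader of the written proof should not have to watch you discover the correct coupling direction in real time, and the intermediate attempt with invariant $\ell^{\mathrm{real}}\ge\ell^{\mathrm{imag}}$ is not merely ``the wrong inequality'' but fails at time zero since $\ell\le\ell'$. Second, the bookkeeping you flag as the main obstacle (truncating Edge-hitter's copied submoves, then padding his imagined move back up to total weight~$1$ with arbitrary legal submoves; extending Staller's real move beyond the point where the imagined game terminates) is genuine and the paper glosses over it too---your plan to spell it out carefully, together with the observation that the imagined game terminates no later than the real one so that $V_{\mathrm{real}}\ge V_{\mathrm{imag}}\ge\tau_g^*(\cH|\ell')-\eps$, is the right level of detail.
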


\Proof
 Assume for a contradiction that
 $\tau_g^*(\cH| \ell)<\tau_g^*(\cH| \ell')$,
     and choose a real $x$ with
      $\tau^*(\cH| \ell) < x < \tau_g^*(\cH| \ell')$.
     We use the imagination strategy between the following two games:

\msk

     {\sl Game 1}: Edge-hitter plays on $\cH|\ell$ applying a strategy which
     ensures that the length is at most $t_1 < x$.

\msk

    {\sl Game 2}: Staller plays on $\cH|\ell'$ applying a strategy which
     ensures that the length is at least $t_2 > x$.

\msk

Once this can be done, the assertion follows by the contradiction
  $x> t_1 > t_2 > x$.

     The moves essentially are copied between Game 1 and Game 2 such that
     $\ell(E)\le \ell'(E)$ remains true for all $E\in \cE$ after every move.
If this inequality is valid before Staller's move, then her next move in $\cH|\ell'$
 is feasible in $\cH|\ell$ as well, so that Edge-hitter can simply copy it
 and reply to it.
On the other hand it may happen that one or more submoves $(v_{i_k},w_{k})$
 made by Edge-hitter in Game 1 are not feasible in Game~2.
Observe, however, that if this happens, then already
 all loads on the edges incident with $v_{i_k}$ reach 1 in $\cH|\ell'$.
It follows that those loads will never become smaller than the corresponding
 ones in $\cH|\ell$.
Consequently, $\tau_g^*(\cH| \ell)\ge \tau_g^*(\cH| \ell')$ holds.

The analogous conclusion can be reached in the Staller-start game as well,
 literally by the same argument, deriving a contradiction from the assumption
 ${\tau_{g}^*}'(\cH| \ell) < {\tau_{g}^*}'(\cH| \ell')$.
\qed

\bsk

We obtain the following immediate consequence.

\begin{thm}
	The  game fractional transversal numbers for the Staller-start
	and for the Edge-hitter-start games on\/ $\cH$ may differ by at most\/ $1$.
\end{thm}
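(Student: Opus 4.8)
The plan is to deduce the two inequalities $\tau_g^*(\cH)\le {\tau_{g}^*}'(\cH)+1$ and ${\tau_{g}^*}'(\cH)\le \tau_g^*(\cH)+1$ from the Continuation Principle (Theorem~\ref{t:contpr}). The point is that carrying out one move can only increase loads, and Theorem~\ref{t:contpr} says larger starting loads can only help the minimizer; so a single move is worth at most the unit of weight it spends.

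For $\tau_g^*(\cH)\le {\tau_{g}^*}'(\cH)+1$ I would have Edge-hitter play the Edge-hitter-start game on $\cH$ by opening with an arbitrary legal move. If that move already finishes the game then $|\cG|\le 1$ and we are done; otherwise the rules force its total weight to be exactly $1$, and it yields a load function $\ell$ with $\ell(E)\ge \ell_0(E)=0$ for all $E\in\cE$. It is now Staller's turn, so what remains is precisely a Staller-start game on $\cH|\ell$, with the cover function restarted from $t\equiv 0$, so that the weight it accounts for is exactly the weight spent after the opening. For an arbitrary $\varepsilon>0$, Edge-hitter then follows a strategy in that game guaranteeing a further cost of at most ${\tau_{g}^*}'(\cH|\ell)+\varepsilon$; one exists by the definition of ${\tau_{g}^*}'(\cH|\ell)$ as an infimum over Edge-hitter's guarantees. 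Since $\ell_0\equiv 0\le \ell$, Theorem~\ref{t:contpr} gives ${\tau_{g}^*}'(\cH|\ell)\le {\tau_{g}^*}'(\cH|\ell_0)={\tau_{g}^*}'(\cH)$, so Edge-hitter forces $|\cG|\le 1+{\tau_{g}^*}'(\cH)+\varepsilon$; letting $\varepsilon\to 0$ yields the bound. The inequality ${\tau_{g}^*}'(\cH)\le \tau_g^*(\cH)+1$ follows symmetrically: in the Staller-start game Staller's (forced) opening move costs at most $1$ and leaves some load $\ell\ge\ell_0$, after which it is Edge-hitter's turn; the remainder is then an Edge-hitter-start game on $\cH|\ell$, in which Edge-hitter can cap the further cost at $\tau_g^*(\cH|\ell)+\varepsilon\le \tau_g^*(\cH)+\varepsilon$ by Theorem~\ref{t:contpr}. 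Combining the two inequalities proves that $\tau_g^*(\cH)$ and ${\tau_{g}^*}'(\cH)$ differ by at most $1$.

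I do not expect a real obstacle here; the points that deserve a line of justification are routine bookkeeping. First, a legal opening move of total weight at most $1$ is always available while $\cH$ is not fully covered: put weight on some vertex lying in a currently uncovered edge, and if a single submove would overshoot an edge's load, split the unit over several such vertices. Second, the weight spent after the opening move is governed exactly by $\tau_g^*(\cH|\ell)$ or ${\tau_{g}^*}'(\cH|\ell)$, because the sub-game's cover function starts at $0$ while its load starts at $\ell$, so its legality tests coincide with those in the original game. Third, ``who moves next'' must be matched with the Edge-hitter-start versus Staller-start conventions used to define those quantities. Finally, the $\varepsilon$ is needed only because at this stage we know $\inf(U)=\sup(L)$ but not yet that the common value is attained; once Section~\ref{sec:algo} establishes $\inf(U)=\min(U)$, the argument runs verbatim with $\varepsilon=0$.
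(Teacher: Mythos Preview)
Your proof is correct and follows essentially the same route as the paper's: apply the Continuation Principle (Theorem~\ref{t:contpr}) to the load function reached after the opening move to bound the remainder of the game by the other-start value, then add the unit cost of that move. You are in fact more careful than the paper about the $\varepsilon$ slack needed before attainment of the optimum is established in Section~\ref{sec:algo}, and about the edge case where the opening move already finishes the game.
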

\Proof
 Consider the Staller-start game.
Whatever Staller moves first, she assigns total weight 1, and
 creates a situation which is at least as
 favorable for Edge-hitter as the all-zero load at the beginning of the
 original transversal game.
Then, due to Theorem \ref{t:contpr}, Edge-hitter can ensure that the game ends
 using at most $\tau^*_g(\cH)$ further weight.
This proves ${\tau_{g}^*}'(\cH) \le \tau_g^*(\cH) + 1$.

Similarly, if Edge-hitter starts, after his first move he is in
 at least as favorable position as with the all-zero load at the beginning
 of the Staller-start game.
This proves the reverse inequality $\tau_g^*(\cH) \le {\tau_{g}^*}'(\cH) + 1$.
\qed

\section{Infinite moves are not necessary} \label{sec:infinite}


The definition of a legal move in the transversal game admits the option that a player splits the value 1 into an infinite number of pieces; e.g., $w_k=2^{-k}$. 
It turns out, however, that each legal move on $\cH=(V,\cE)$ is equivalent to
 a move which consists of at most $|V|$ submoves.

\begin{thm} \label{finite}
	Every legal move in a fractional transversal game can be replaced with a legal move such that each vertex occurs in at most one submove of it and the two moves result in the same load function. 
\end{thm}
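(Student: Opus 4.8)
The plan is to prove the statement by induction on the number of vertices that receive positive total weight, proving along the way exactly which order on these vertices to use. Fix the move in question, say the $i^{\rm th}$ move $(v_{i_1},w_1),(v_{i_2},w_2),\dots$ (finite or infinite). For each $v\in V$ put $W(v)=\sum_{k:\,v_{i_k}=v}w_k$; this series converges since $\sum_k w_k\le 1$. Let $S=\{v\in V: W(v)>0\}$, which is finite because $V$ is. The replacement move will of course consist of a single submove $(v,W(v))$ for each $v\in S$, played in an order to be determined. Two observations make most of the bookkeeping free: the resulting cover function is $t_{i-1}(v)+W(v)$ for either move, hence the two moves produce the same load function automatically; and $\sum_{v\in S}W(v)=\sum_k w_k$, so the replacement is legal ``of the same type'' (ordinary move, resp.\ last move) as soon as it satisfies condition $(*)$ at every submove. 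So the whole task is to exhibit an order on $S$ for which $(*)$ holds throughout.

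The key lemma I would isolate is: there exist $v^*\in S$ and an edge $F\ni v^*$ with $\ell_{i-1}(F)+\sum_{v\in F}W(v)\le 1$, i.e.\ some edge through $v^*$ does not overflow even after the whole move is carried out. I would prove this in two cases. If only finitely many submoves carry positive weight, let $K$ be the last such index, set $v^*=v_{i_K}$, and let $F=E_K$ be the witness edge supplied by $(*)$ for submove $K$; since all later submoves carry weight $0$, condition $(*)$ for submove $K$ reads precisely $\ell_{i-1}(F)+\sum_{v\in F}W(v)\le 1$. If infinitely many submoves carry positive weight, then (as $V$ is finite) some $v^*\in S$ appears in infinitely many of them, and (as $\cE$ is finite) one edge $F$ is the witness edge for infinitely many of those; passing to the limit of the inequalities in $(*)$ along that subsequence — the partial sums $\sum_{s\le k,\,v_{i_s}\in F}w_s$ increase to $\sum_{v\in F}W(v)$ — again gives $\ell_{i-1}(F)+\sum_{v\in F}W(v)\le 1$. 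This is the only point at which finiteness of $\cH$ enters, and I expect the infinite case of this lemma to be the main technical obstacle.

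Granting the lemma, the induction is routine. Take $v^*$ and $F$ as above and form the ``reduced'' sequence by deleting from the original move every submove that plays $v^*$. Deleting submoves only decreases the running load of every edge, so the reduced sequence still satisfies $(*)$ with the same witness edges, still has weight sum $\le 1$, and has positive-weight support $S\setminus\{v^*\}$, one vertex smaller. By the induction hypothesis the reduced move can be replaced by $(u_1,W(u_1)),\dots,(u_{m-1},W(u_{m-1}))$ with $\{u_1,\dots,u_{m-1}\}=S\setminus\{v^*\}$, all distinct, satisfying $(*)$ relative to $\ell_{i-1}$. Now append $(v^*,W(v^*))$ as the final submove: submoves $1,\dots,m-1$ satisfy $(*)$ by the induction hypothesis, and for the last submove the edge $F$ works, since the load $F$ carries just before it is $\ell_{i-1}(F)+\sum_{v\in F\cap(S\setminus\{v^*\})}W(v)$, and adding $W(v^*)$ (using $v^*\in F$) yields exactly $\ell_{i-1}(F)+\sum_{v\in F}W(v)\le 1$. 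The base case $S=\emptyset$, where all weights are $0$, is handled by the empty move (or a single zero-weight submove). Unwinding the recursion produces the desired ordering $u_1,\dots,u_m$ of $S$, hence a legal replacement move with the same load function in which every vertex occurs in at most one submove; as a byproduct it uses at most $|V|$ submoves.
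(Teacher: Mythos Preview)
Your proof is correct and takes a genuinely different route from the paper's. The paper first merges any two occurrences of the same vertex into the \emph{later} one (the witness edge of the later occurrence still certifies the combined weight), which already disposes of finite moves; for an infinite move it then isolates a tail in which every vertex present recurs infinitely often and argues by contradiction that the aggregated sequence $(v_1,W_1),\dots,(v_\ell,W_\ell)$ is legal in an arbitrary fixed order. You instead isolate a single key lemma---some vertex $v^*$ admits an edge $F\ni v^*$ that does not overflow even after the \emph{entire} move is played---and run a clean induction on $|S|$, peeling off $v^*$ as the final submove and recursing on the sequence with all $v^*$-submoves deleted. Your treatment is more uniform (no finite/infinite case split beyond the two-line lemma) and makes the use of finiteness of $\cH$ completely transparent via two pigeonhole steps; the paper's argument, on the other hand, actually establishes the stronger intermediate fact that in the infinite-tail situation \emph{every} ordering of the aggregated submoves is legal, not just the particular one your induction produces.
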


\Proof
First, consider a vertex $v$ which occurs in two different submoves $(v_{i_j}, w_{j})$
 and $(v_{i_k}, w_{k})$ of a move.
That is, $v=v_{i_j}=v_{i_k}$ and we may assume $j < k$.
By the condition $(*)$, there exists an edge $E\in \cE$ such that $v \in E$ and
 the second submove $(v_{i_k}, w_{k})$ increases the load of $E$ by exactly $w_{k}$.
If the submove $(v_{i_j}, w_{j})$ is deleted from the sequence and the weight
 $w_{k}$ is replaced by $w_{j}+w_{k}$ in the $k^{\mathrm{th}}$ submove,
  the submove and the whole move remain  legal and result 
   in the same load function as before.
Performing this modification repeatedly we can achieve that every vertex occurs
 in either zero or exactly one or infinitely many submoves of the move in question.
This already proves the statement if the move contains only a finite number of submoves.

Now, assume that the move is infinite.
Then, the sequence of submoves can be split into two, such that the first part
 is finite, and in the second infinite part every vertex (which is present there)
 is repeated infinitely many times.
Consider this infinite subsequence $S=(v_{i_s},w_s), \dots $.
By renaming the vertices of $\cH$ if necessary, we may assume that
$
\{v_1, \dots ,v_\ell\}$ is the set of the vertices which are present in $S$.
We prove that the finite sequence $S'=(v_1, \sum_{j:\,\,i_j=1}w_j),\dots , (v_\ell, \sum_{j:\,\,i_j=\ell}w_j)$ is equivalent to $S$.
Clearly, $S$ and $S'$ yield the same load function after the move.
So, it is enough to prove that $S'$ is legal.
Assume for a contradiction that $(v_k, \sum_{j:\,\,i_j=k}w_j)$ is not a legal submove
 in $S'$, and let $k$ be the smallest such index.
Then, after the $(k-1)^{\rm st}$ submove of $S'$, every edge $E$ which contains $v_k$ has
 a load $\ell(E) > 1- \sum_{j:\,\,i_j=k}w_j$ and, moreover, there is a positive constant
  $\epsilon$ such that $\min_{v_k\in E} \ell(E) + \sum_{j:\,\,i_j=k}w_j = 1+\epsilon.$
Now, consider $S$ again.
There is an index $p=p(\epsilon)$ such that   $\sum_{j\ge p}w_j <\epsilon$ and hence, before the $p^{\mathrm{th}}$ submove of $S$, each edge containing $v_k$ is fully covered.
As $v_k$ occurs infinitely often in $S$, and also the occurrences after the $p^{\mathrm{th}}$ submove are legal, this is a contradiction. \qed
\medskip

We say that a legal finite move $(v_{i_1}, w_{1}), \dots, (v_{i_k}, w_{k})$ is \textit{transposable}
 if for any permutation $\beta(1), \dots , \beta(k)$ of $1,\dots ,k$, the move $(v_{i_{\beta(1)}}, w_{\beta(1)}), \dots, (v_{i_\beta(k)}, w_{\beta(k)})$ is also legal.  

\begin{prop}
A legal move\/ $(v_{i_1}, w_{1}), \dots, (v_{i_k}, w_{k})$, where\/
 $w_j>0$ for all\/ $1\leq j\leq k$ and no vertices are repeated,
  is transposable if and only if every vertex\/ $v_{i_j}$\/ is incident with
  an edge\/ $E_{i_j}$ such that\/ $\sum_{v_{i}\in E_{i_j}} t(v_i)\leq 1$
   holds after performing the entire move.
\end{prop}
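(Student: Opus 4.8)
The plan is to prove both implications by directly unravelling the legality condition~$(*)$, exploiting that all weights are strictly positive. Throughout, write $t$ for the cover function reached after the whole move (as in the statement) and let $\hat t$, $\hat\ell$ denote the cover and load functions in force immediately before the move. Since no vertex is repeated, for every edge $E\in\cE$ we have $\sum_{v\in E}t(v)=\sum_{v\in E}\hat t(v)+\sum_{j:\,v_{i_j}\in E}w_j$, while from the definition of the load function $\hat\ell(E)=\min\{1,\sum_{v\in E}\hat t(v)\}\le\sum_{v\in E}\hat t(v)$. A permutation preserves the multiset of weights, so the side condition $\sum w=1$ (or $\le 1$ in a last move) is automatically inherited by any rearrangement; only legality of the individual submoves needs to be checked.

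For the ``if'' direction I would assume that every $v_{i_j}$ lies in an edge $E_{i_j}$ with $\sum_{v\in E_{i_j}}t(v)\le 1$, fix an arbitrary permutation $\beta$ of $1,\dots,k$, and verify~$(*)$ for each submove of the rearranged move, offering $E_{i_{\beta(m)}}$ as the witness edge for the $m$-th submove $(v_{i_{\beta(m)}},w_{\beta(m)})$. The left-hand side of~$(*)$ for this submove is $\hat\ell(E_{i_{\beta(m)}})$ plus the sum of the weights $w_{\beta(s)}$ over those $s\le m$ with $v_{i_{\beta(s)}}\in E_{i_{\beta(m)}}$; since the weights are non-negative, that partial sum is at most $\sum_{j:\,v_{i_j}\in E_{i_{\beta(m)}}}w_j$, and together with $\hat\ell(E_{i_{\beta(m)}})\le\sum_{v\in E_{i_{\beta(m)}}}\hat t(v)$ the whole expression is at most $\sum_{v\in E_{i_{\beta(m)}}}t(v)\le 1$. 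Hence every submove, and so the rearranged move, is legal, and the move is transposable.

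For the ``only if'' direction I would fix $j$ and apply transposability to a permutation $\beta$ with $\beta(k)=j$, i.e.\ one that performs $(v_{i_j},w_j)$ last. Legality of this rearrangement, applied to its final submove, yields a witness edge $E\ni v_{i_j}$; and since the first $k-1$ submoves now run through exactly the pairs $(v_{i_{j'}},w_{j'})$ with $j'\ne j$, condition~$(*)$ reduces to $\hat\ell(E)+\sum_{j':\,v_{i_{j'}}\in E}w_{j'}\le 1$. The left-hand sum contains the term $w_j>0$, which forces $\hat\ell(E)<1$ and hence $\hat\ell(E)=\sum_{v\in E}\hat t(v)$; substituting this back gives $\sum_{v\in E}t(v)=\sum_{v\in E}\hat t(v)+\sum_{j':\,v_{i_{j'}}\in E}w_{j'}\le 1$, so $E_{i_j}:=E$ is the required edge.

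I expect the one delicate point — and the only place where strict positivity of the weights is genuinely needed — to be the step just above where one must exclude the possibility that the witness edge $E$ is already over-covered before the move, i.e.\ that the truncation in $\hat\ell$ is active with $\hat\ell(E)=1$. Positivity of $w_j$ together with $v_{i_j}\in E$ is precisely what makes~$(*)$ fail in that case, which is why $\hat\ell(E)$ must equal the untruncated sum $\sum_{v\in E}\hat t(v)$. Everything else is routine finite-sum bookkeeping, so I anticipate no further obstacles.
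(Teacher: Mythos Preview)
Your proof is correct and follows essentially the same route as the paper's: both directions hinge on placing $(v_{i_j},w_j)$ in an arbitrary position (in particular, last) and using the candidate edge $E_{i_j}$ as the witness for condition~$(*)$. Your treatment is in fact slightly more careful than the paper's, since you make explicit the step where $w_j>0$ forces $\hat\ell(E)<1$ and hence $\hat\ell(E)=\sum_{v\in E}\hat t(v)$; the paper's contrapositive argument glosses over this truncation issue, though it still arrives at the right conclusion.
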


\Proof
If the inequality holds for $E_{i_j}$, then omitting the submove $(v_{i_j}, w_{j})$
 from the move we obtain
  $\sum_{v_i\in E_{i_j} \setminus \{v_{i_j}\}} t(v_i)\leq 1-w_{j}$.
Hence $(v_{i_j}, w_{j})$ is a legal submove no matter when it is performed
 during the move.
This means that the move is transposable
 whenever the condition is satisfied for all $j$.

In the other direction assume that for some $v_{i_j}$ every edge violates
 the inequality.
Violation means that the corresponding sum is at least $1+\epsilon$ on every edge,
 for a certain fixed $\epsilon>0$.
Consider now a permutation in which $(v_{i_j}, w_{j})$ is the last submove.
Then $(v_{i_j}, w_{j})$ is not legal because at most $w_j-\epsilon$ can be
 assigned to $v_{i_j}$ legally.
Consequently the move is not transposable.
\qed

\begin{thm}
	\label{prop:transp}
  If a legal move\/ $m=(v_{i_1}, w_{1}), \dots, (v_{i_k}, w_{k})$ is not transposable
   in the fractional transversal game, then it can be replaced by a legal
    transposable move after which no edge gets smaller load than after\/ $m$. 
\end{thm}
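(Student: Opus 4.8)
The plan is to run the characterisation in the Proposition immediately above: a legal finite move with positive weights and no repeated vertices is transposable exactly when every one of its vertices lies in an edge whose total $t$-value is at most $1$ after the move. First I would normalise $m$: by Theorem~\ref{finite} I may assume the move has no repeated vertices, and after discarding any weight-$0$ submoves I may assume all weights are positive; if the resulting move is already transposable there is nothing to prove, so assume not. Then the Proposition supplies a \emph{bad} vertex $v_{i_j}$, meaning that \emph{every} edge $E\ni v_{i_j}$ has $\sum_{v\in E}t(v)>1$ after $m$. Let $\delta_j>0$ be the smallest of the surpluses $\sum_{v\in E}t(v)-1$ over edges $E\ni v_{i_j}$.

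The core step is to shrink the wasted weight on a bad vertex: replace $w_j$ by $w_j':=\max(0,\,w_j-\delta_j)$, deleting the $j$-th submove if $w_j'=0$. Four things then need to be checked, all routine. (i) Condition $(*)$ still holds for every submove, because lowering $w_j$ only lowers the left-hand side of the $(*)$-inequality of each submove — in the inequality for submove $k$ the term $w_j$ is either absent or appears in a partial sum, and in both cases the old witnessing edge still works. (ii) No edge loses load: an edge avoiding $v_{i_j}$ is untouched, while an edge $E\ni v_{i_j}$ had $\sum_{v\in E}t(v)\ge 1+\delta_j$ before and still has $\sum_{v\in E}t(v)\ge 1$ afterwards, so its capped load stays $1$. (iii) $v_{i_j}$ is no longer bad: it has been deleted, or the surplus-minimising edge now has total exactly $1$. (iv) No previously good vertex becomes bad, since the $t$-total of its witnessing edge can only have decreased (and only if that edge contained $v_{i_j}$), so it is still at most $1$.

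Now I iterate: each application of the core step strictly decreases the number of bad vertices among the at most $|V|$ submoves and creates no new ones, so after finitely many steps no bad vertex remains, and by the Proposition the move is transposable. Throughout, the load function is unchanged, so no edge ends up with a smaller load than after~$m$, and condition~$(*)$, positivity of weights, and the no-repetition property are all preserved. If the configuration reached by $m$ already covers $\cH$, then $m$ and the new move are both final moves, for which total weight $\le 1$ is admissible, and we are done.

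It remains to treat the case where $\cH$ is not yet covered, so $m$ is a non-final move and the replacement, having dropped some (wasted) weight, must be topped up to total weight $1$. Here I would pour the missing weight into still-uncovered edges by appending submoves that each raise one chosen uncovered edge up to load at most~$1$ (filling it exactly when possible); this keeps $(*)$ valid, only increases loads, and places each newly used vertex in an edge of total $\le 1$. The nuisance is that pouring weight through a vertex may push some previously exactly-filled edge over~$1$ and so break its role as a witness; when that happens I re-apply the core step to the affected vertex. Since every time an uncovered edge is filled exactly it leaves the list of uncovered edges permanently, this can happen at most $|\cE|$ times, which bounds the whole fix-up, and the process stops either with the surplus used up (total weight $1$) or with $\cH$ covered (a final move, total $\le 1$). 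I expect this last bit of bookkeeping — making the re-spending and the re-reductions terminate cleanly — to be the only real obstacle; the heart of the argument, the weight-shrinking step and its iteration, is short precisely because loads only ever stay the same or go up.
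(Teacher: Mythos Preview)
Your weight-shrinking phase is correct and arguably cleaner than the paper's route. The paper instead rotates the move cyclically (permutation $2,\dots,k,1$, then $3,\dots,k,1,2$, and so on), at each stage truncating the now-last submove to its maximal legal weight; after $k$ rotations this yields a transposable move with the same load function. Your direct use of the Proposition's characterisation---find a bad vertex, subtract the minimum surplus, repeat---reaches the same endpoint with a more transparent invariant (bad-vertex count strictly decreases), and checks (i)--(iv) are all sound.

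The genuine gap is exactly where you suspect: the redistribution phase. Your $|\cE|$ bound does not work as stated. You argue that the process is bounded because ``every time an uncovered edge is filled exactly it leaves the list permanently'', but nothing forces an edge to be filled exactly in a given cycle. The surplus $s$ may well be smaller than every remaining gap $1-\ell(E)$; you then pour all of $s$ into one edge without filling it, some vertex becomes bad, the core step frees a surplus $s'\le s$, and the situation repeats with strictly smaller but still positive surplus. All one can conclude is that the surpluses $s_i$ are summable (the total load is bounded by $|\cE|$ and each top-up raises it by at least $s_i$), hence $s_i\to 0$---convergence, not finite termination.

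This is precisely the obstacle the paper meets, and it is resolved there by an explicit limit argument: the vertex weights are Cauchy (the change in cycle $i$ is at most $s_i$), so they converge to a limit move $m^*$; one then checks that $m^*$ is legal and transposable by choosing $\epsilon>0$, finding $i(\epsilon)$ with $\sum_{i\ge i(\epsilon)} s_i < \epsilon/k$, and comparing $m^*$ against the legal move at stage $i(\epsilon)$. Your argument needs the same limiting step (or a genuinely different idea ruling out infinite cycling); the $|\cE|$ count alone does not supply it.
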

\Proof
First, consider the legal move $m=(v_{i_1}, w_{1}), \dots, (v_{i_k}, w_{k})$ and the move $m'$  which is obtained by the permutation $\beta=2, \dots, k, 1$.
It is clear by definition that the first $k-1$ submoves of $m'$ remain legal. For the last (and  not necessarily legal) submove, determine $w_1^*$ as the maximum weight which results in a legal last submove with $v_{i_1}$.
We have $0 \le w_1^* \le  w_1$. If $w_1=w_1^*$, then $m'$ is legal and gives exactly the same load function as $m$. If $w_1 > w_1^*$, then the same load function is obtained after the submove $(v_{i_1}, w_1^*) $ as after $m$, because in both cases every edge incident with $v_{i_1}$ is fully covered
 and the loads of the other edges are unchanged.
In this latter case, the sum of the weights used is  $1-(w_1-w_1^*) <1$.
After this change,  the submove $(v_{i_1}, w_1^*)$ will be legal in any permutation of $(v_{i_1}, w_{1}), \dots, (v_{i_k}, w_{k})$.
That is, if a permutation is not feasible after this replacement, then infeasibility
 is due to another vertex.
The same is true if some weights $w_s$ are replaced by smaller weights.

We repeat this
 step for the modified sequence with permutation $\beta=3, \dots, k, 1, 2$,
 then with $\beta=4, \dots, k, 1, 2, 3$, and so on, finally with
 $\beta=k, 1, 2, \dots, k-1$, keeping all modifications incrementally.
Decreasing the weight of the last submove in each step if necessary, at the end a
 legal transposable move $m^*$ is obtained, which yields the same load function as
  $m$ and satisfies $\sum_{j=1}^k w_j^* \le \sum_{j=1}^k w_j$.
If the game is not over yet and if $s^*= \sum_{j=1}^k w_j-\sum_{j=1}^k w_j^*$ is positive, we may use the weight $s^*$ to increase the loads of the non-fully covered edges.
This may yield a move which is not transposable.
If so, then we repeatedly perform the steps of modifiation as described above.

If the process terminates after a finite number of iterations, then the last
 version of the move is transposable, due to the way as the steps have been defined.
Otherwise we obtain an infinite sequence $s_1^*,s_2^*,\dots$ of re-distributions
 from the total unit weight of the move.
Note that the total load of edges increases by at least $\sum_{i\geq 1} s_i^*$,
 hence this sum is convergent because altogether the total load is at most $|\cE|$.
On the other hand, the weight of each vertex changes by at most $s_i^*$ in the
 $i^{\mathrm{th}}$ iteration, hence the local changes in weight (one of which is negative
 in each iteration) sum up to at most $\sum_{i\geq 1} s_i^*$, therefore they
  are also convergent.
It means that for every $\epsilon>0$ there exists a threshold $i(\epsilon)$
 such that $\sum_{i\geq i(\epsilon)} s_i^* < \epsilon/k$.
That is, the cummulative change of weight at each vertex obtained later than
 iteration $i(\epsilon)$ is smaller than $\epsilon/k$, and the change in
  the sum of weights inside each edge is smaller than $\epsilon$.

Let $w^*$ denote the limit sequence of the $k$ weights of the $k$ submoves
 under this infinite process,
 i.e.\ the submove at $v_j$ tends to weight $w_j^*$ for $j=1,\dots,k$
 (allowing that some of the $w_j^*$ are zero).
We claim that the move $m^*=(v_1,w_1^*),\dots,(v_k,w_k^*)$
 is legal and transposable.
This will complete the proof because the loads never decrease, hence
 under $w^*$ no edge gets smaller load than by move $m$.
Let us also denote by $t^*$ the cover function obtained after
 $m^*$, disregarding for the moment that the move is not yet
 proved to be legal.

Suppose for a contradiction that some permutation $\beta$ yields a move
 which is not legal.
We may assume without loss of generality that $\beta=1,2,\dots,k$ and that the
 submove $(v_{k}, w_{k}^*)$ is not legal.
It means $\sum_{v_j\in E} t^*(v_j)\geq1+\epsilon$ for all edges $E$
 containing $v_k$, for some constant $\epsilon>0$.
We know, however, that if we subtract the distributed weight $s^*_{i(\epsilon)}$
 from the vertex loads in iteration $i(\epsilon)$, we obtain a legal move;
 and compared to that situation, each vertex weight changed by
 no more than $\sum_{i\geq i(\epsilon)} s_i^* < \epsilon/k$.
Consequently there exists an edge $E\in\cE$ such that $v_k\in E$ and
 $$
   \sum_{v_j\in E} \biggl( t^*(v_j) - \sum_{i\geq i(\epsilon)} s_i^* \biggr)
    \leq 1 ,
 $$
 $$
   \sum_{v_j\in E}  t^*(v_j) 
    < |E|\cdot\epsilon/k < 1+\epsilon .
 $$
This contradiction completes the proof of the theorem.
\qed

\begin{rmk}\label{remark:transposable}
Based on Theorem~\ref{prop:transp}, Edge-hitter may restrict his strategy to transposable moves. On the other hand, the result suggests that Staller is advised to perform moves, if possible, which are `very non-transposable' in a sense.
\end{rmk}

\section{Algorithm for computing the value of the game}
\label{sec:algo}

We consider an equivalent version of the game,
the \emph{structured game},
which is easier to
analyze.

Each \emph{move} consists of $n+1$ \emph{rounds}.
Each round consists of $n$ \emph{submoves}, which are dedicated to the vertices
 $v_1,\dots,v_n$ in succession.
In each submove, the player whose turn it is can decide the amount $w$,
 the weight spent in the submove,
by which the cover value of $t(v_i)$ is increased, subject to the usual rules:
The increase must be useful, i.e.\ each submove must satisfy the condition
 $(*)$, and it must be within the \emph{budget
constraint} of total weight 1 to be spent per move.
It is possible to skip a submove by simply choosing $w$ to be zero.

The first $n$ rounds are identical, but the last round is special: In
each submove, the weight is greedily chosen as the largest possible
 legal weight, hence not allowing any freedom in choosing $w$ for the
  player in those submoves.
This ensures that the whole move spends a
total weight of 1 unless a cover is obtained.

There are $n$ moves, alternating between the two players.
This is enough to ensure that a cover is constructed when the game terminates.
Every move consists of $n^2+n$ submoves, and in total, the game consists of $N=n^3+n^2$ submoves.
We illustrate this for a small example with $n=4$ vertices, where the
Edge-hitter starts.
The sequence of submoves is
\begin{tabbing}
  \quad\=\+
$
  H_1,H_2,H_3,H_4;$ \ \=$
  H_1,H_2,H_3,H_4;$ \ \=$
  H_1,H_2,H_3,H_4;$ \ \=$
  H_1,H_2,H_3,H_4;$ \ \=$
  G_1,G_2,G_3,G_4;$\\$
  S_1,S_2,S_3,S_4;$\>$
  S_1,S_2,S_3,S_4;$\>$
  S_1,S_2,S_3,S_4;$\>$
  S_1,S_2,S_3,S_4;$\>$
  G_1,G_2,G_3,G_4;$\\$
  H_1,H_2,H_3,H_4;$\>$
  H_1,H_2,H_3,H_4;$\>$
  H_1,H_2,H_3,H_4;$\>$
  H_1,H_2,H_3,H_4;$\>$
  G_1,G_2,G_3,G_4;$\\$
  S_1,S_2,S_3,S_4;$\>$
  S_1,S_2,S_3,S_4;$\>$
  S_1,S_2,S_3,S_4;$\>$
  S_1,S_2,S_3,S_4;$\>$
  G_1,G_2,G_3,G_4.$
\end{tabbing}
Here $H_i$ denotes a move of Edge-hitter for vertex $i$, and
$S_i$ denotes a move of Staller for vertex~$i$.
The greedy moves are denoted by $G_i$.

We do not stipulate as part of the rules that the whole budget of 1
unit must be spent during a move. This capacity is only an upper
bound.  It is still true that the whole budget is spent in each move
if the game is played from the beginning.  However, this arises as a
\emph{consequence} of the new setup, due to the greedy moves.

The \emph{duration} of a play is defined as the total weight
spent during all submoves.
As soon as a cover is found, the rules imply that no more weight can
be spent, and thus the game is effectively over.

\begin{lem}
  The structured game has the same value as the original game.
\end{lem}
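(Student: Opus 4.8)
The plan is to show two inequalities, $|\cG_{\mathrm{struct}}| \le |\cG_{\mathrm{orig}}|$ and $|\cG_{\mathrm{struct}}| \ge |\cG_{\mathrm{orig}}|$ (where these denote the optimal game values), by exhibiting, for each player, how a strategy in one game can be simulated in the other. The main point is that the structured game merely \emph{regiments} the order and packaging of submoves without changing what a player can ultimately accomplish in a single move. The key observations I would assemble first are: (a) by Theorem~\ref{finite}, any legal move in the original game may be taken to touch each vertex at most once, so it is a finite sequence of at most $n$ submoves; (b) by Theorem~\ref{prop:transp} together with Remark~\ref{remark:transposable}, Edge-hitter may assume his moves are transposable, so that reordering his submoves into the fixed order $v_1,\dots,v_n$ costs him nothing; (c) the $n$ inner rounds of a structured move give a player enough ``passes'' over the vertex list to realize any such finite move even when the natural order of his intended submoves is not the canonical one.

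\medskip

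\textbf{From original to structured.} Suppose a player has a strategy in the original game. In the structured game, whenever it is that player's turn (a block of $n$ rounds followed by a greedy round), I would have the player compute the move $m=(v_{j_1},w_1),\dots,(v_{j_k},w_k)$ that the original strategy prescribes in the current position, with each vertex appearing at most once by~(a). The player then realizes $m$ inside the first $n$ rounds: in round $r$ he performs the submove $(v_{j_r},w_r)$ at the appropriate vertex-slot and skips (weight $0$) everywhere else; since $k\le n$ this fits. For Edge-hitter, by~(b) we may assume $m$ is transposable, so each of its submoves remains legal regardless of the order in which the canonical submove-slots are visited within and across rounds; the ordering imposed by the structured game is thus harmless. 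For Staller, transposability need not hold, but by~(a) the move is finite, and spreading its (at most $n$) submoves across $n$ separate rounds — submove $w_r$ in round $r$ — guarantees that when slot $v_{j_r}$ is reached in round $r$, only the earlier submoves $w_1,\dots,w_{r-1}$ have been applied, exactly as in the original linear order of $m$, so legality is preserved verbatim. The budget of $1$ per move is respected because $\sum w_r\le 1$. Finally the greedy round adds only weight that a legal move in the original game would have been forced to spend as well (it covers edges that were not yet covered). Hence the simulating player spends no more (for Edge-hitter) / no less (for Staller) total weight than in the original game, so $\ftg(\mathrm{struct})\le\ftg(\mathrm{orig})$ and $\ftgp(\mathrm{struct})\le\ftgp(\mathrm{orig})$ from Edge-hitter's side and the reverse from Staller's side — i.e.\ equality.

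\medskip

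\textbf{From structured to original.} Conversely, given a strategy in the structured game, a player simulates it in the original game by collecting the submoves of one structured move — all $n$ rounds plus the greedy round — and playing their concatenation as a single original move; the aggregate touches each vertex possibly several times and has total weight at most~$1$, and each submove of it was legal under the structured rules, hence legal under $(*)$ as well, so the concatenation is a legal original move with the same resulting load function. The opponent's response in the original game is then itself decomposed (using Theorem~\ref{finite} to make it touch each vertex at most once, and for Edge-hitter Theorem~\ref{prop:transp} to make it transposable) and fed back into the structured game round-by-round as in the previous paragraph. In this way each player can guarantee in the original game whatever it could guarantee in the structured game, giving the matching inequalities.

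\medskip

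\textbf{Main obstacle.} The delicate point is legality of the simulation for Staller in the direction ``original $\to$ structured'': a Staller move need not be transposable, so we cannot freely reorder its submoves into the canonical slot order. The resolution sketched above — placing each of the (finitely many, by Theorem~\ref{finite}) submoves in its own round, in the same relative order as in $m$ — works, but it must be checked carefully that the canonical within-round slot order never forces an earlier application of a later submove: since each round carries at most one nonzero Staller submove, this holds, and it is exactly why the structured game needs $n$ full inner rounds rather than one. A secondary technical nuisance is confirming that the forced greedy round never spends more than a legal optimal original move would have to, and never ends the game ``too early'' relative to the original game; this follows because greedy weight is only ever placed on currently-uncovered edges, which any legal continuation in the original game would also have to cover.
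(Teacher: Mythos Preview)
Your proof is correct and follows essentially the same approach as the paper's: use Theorem~\ref{finite} to reduce each move to at most $n$ submoves and realize them one-per-round in the structured game, while observing that the structured game grants no extra power because the greedy round merely enforces the original game's weight-$1$ requirement. Your detour through transposability for Edge-hitter is unnecessary --- the one-submove-per-round placement you use for Staller works equally well for both players, which is exactly how the paper argues --- and your justification for the greedy round can be sharpened: in the simulation it always contributes weight~$0$, since either the full budget of~$1$ has already been spent in the first $n$ rounds, or (in the final move) $\cH$ is already fully covered.
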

\begin{proof}
  According to Theorem~\ref{finite}, we can assume that
  every vertex occurs at most once in a move.
  We can realize this in the structured game by
  selecting one vertex per round and leaving the weight at 0
  otherwise.
  Thus, the structured game does not restrict the players' strategies,
  when compared to the original game.
  On the other hand, the structured game does not give the players
  more power:
  The greedy moves ensure that the total weight of 1 is used as long
  as it is possible.
\end{proof}
In Remark~\ref{remark:transposable} we have argued that, for the
Edge-hitter, any permutation will do.
Thus, a single round
$H_1,H_2,\ldots,H_n$ followed by $n$ greedy submoves would be sufficient for
Edge-hitter.  For simplicity, we have however chosen to treat the two players uniformly.
Note that the greedy submoves are necessary also in case of Edge-hitter.
Otherwise, for example, he might pass the first move and transform the game to the Staller-start version which sometimes admits a smaller game value
 	(cf.\ the example of $C_4$).

Consider the situation after the $j^{\mathrm{th}}$ submove, $0\le j \le N$.
Let $\vec x\in [0,1]^{\cE}$ 
 be an arbitrary load vector, and let $r\in [0,1]$
be the budget for the current move that is still available.
If $j=k(n^2+n)+i$, $r$ is the total weight spent
in the last $i$~submoves.

We define
\begin{displaymath}
  T_j(\vec x,r)
\end{displaymath}
as the remaining duration of the game
if both players play
optimally, starting from the current situation.
If $j$ is not very small then for a certain region of $\vec x$
 it happens that a complete fractional cover
 is not reached because the game necessarily ends after the last submove
 of the $n^{\mathrm{th}}$ round.\footnote{Remark that
  if the $j^{\mathrm{th}}$ submove is the last submove in a move of Staller
   and $\ell_x$ is the load function corresponding to $\vec x$,
 then we would expect $T_j(\vec x,1)$ to be
$\tau_g^*(\cH| \ell_x)$, but this cannot be true in general because
$\tau_g^*(\cH| \ell_x)$ may be larger than the number of remaining moves.
}
This convention
 has the advantage that $T_j$ is
defined for arbitrary $\vec x$ and $r$.

The value of the whole game is $T_0(\vec 0,1)$.

We will derive a backward recursion for the functions $T_j$, and thus
show that they are piecewise linear and continuous.

Given $j$, we know the type of the $j^{\mathrm{th}}$ submove ($H$, $S$, or $G$) and the
vertex $v_i$ to which it applies.
We denote the maximum permitted weight by 
\begin{equation}
  \label{eq:wmax}
w^{\max}_i(\vec x,r) = \min \{r, \max\{\,1-x_E\mid E\ni v_i\,\}\} \,.
\end{equation}
For the result of increasing the cover value 
 of $v_i$ by $w$ we write
\begin{equation}
  \label{eq:update}
  \textit{update}_i(\vec x,w) = \vec x\,'
\text{ \ with \ }
x'_E =
\begin{cases}
  x_E, & \text{if $v_i\notin E$,}\\
  \min\{1,x_E+w\}, & \text{if $v_i\in E$.}\\
\end{cases}
\end{equation}
With these definitions,
the recursion for a submove $H_i$ for Edge-hitter can be written easily:
\begin{equation}
  \label{eq:Edge-hitter}
  T_{j-1}(\vec x,r) = 
    \min\{\, w+ T_{j}(\textit{update}_i(\vec x,w),r-w) \mid 
    0 \le w \le w^{\max}_i(\vec x,r)\,\}
\end{equation}
If the submove is for the Staller ($S_i$), the recursion is the same as
\eqref{eq:Edge-hitter}, except that $\min$ is replaced by
 $\max$.
In the greedy submoves $G_i$, we always choose
$w = w^{\max}_i(\vec x,r)$:
\begin{equation}
  \label{eq:greedy}
  T_{j-1}(\vec x,r) = 
w^{\max}_i(\vec x,r)+ T_{j}(\textit{update}_i(\vec x,w^{\max}_i(\vec x,r)),r-w^{\max}_i(\vec x,r)
)
\end{equation}
The last greedy submove $G_n$ of each move is an exception: Since a
different move is about to start, the budget $r$ is reset to 1.
Thus, when $j$ is a multiple of $n^2+n$, then
\begin{equation}
  \label{eq:greedy-n}
  T_{j-1}(\vec x,r) = 
w^{\max}_n(\vec x,r)+
 T_{j}(\textit{update}_n(\vec x,w^{\max}_n(\vec x,r)),0).
\end{equation}
As the recursion anchor, we use the
value after the final move, which is simply
\begin{equation}
  \label{eq:final-T}
  T_N(\vec x,r)=1.
\end{equation}


\begin{thm}\label{rational}
  Each function $T_j(\vec x,r)$ for $0\le j \le N$ is a piecewise
  linear continuous
function with finitely many linear pieces defined on 
$[0,1]^{\cE}\times[0,1]$.
Moreover, all\/ $T_j$ are rational in the sense that each linear piece has
rational coefficients and rational constant part.
As a consequence, the boundaries between regions of the domain
 with different linear functions
can be described by rational equations.
\end{thm}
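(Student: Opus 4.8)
The plan is to prove the theorem by backward induction on $j$, running from the anchor \eqref{eq:final-T} at $j=N$ down to $j=0$. The base case $T_N\equiv 1$ is an affine rational function with a single linear piece (and it is bounded). For the inductive step I isolate two closure properties of the family $\mathcal{L}$ of piecewise linear continuous rational functions on a fixed bounded rational polytope: \emph{(a)} $\mathcal{L}$ is closed under addition, multiplication by a rational scalar, and composition $f\circ\phi$ where $\phi$ is a continuous map all of whose components lie in $\mathcal{L}$; and \emph{(b)} if $g(\vec x,w,r)\in\mathcal{L}$ and $W(\vec x,r)\in\mathcal{L}$ with $0\le W\le 1$, then $(\vec x,r)\mapsto\min\{g(\vec x,w,r):0\le w\le W(\vec x,r)\}$ lies in $\mathcal{L}$, and likewise with $\max$. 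Granting these, the greedy recursions \eqref{eq:greedy} and \eqref{eq:greedy-n} follow from \emph{(a)}, and the Edge-hitter and Staller recursions \eqref{eq:Edge-hitter} follow from \emph{(b)} applied to $g(\vec x,w,r)=w+T_j(\textit{update}_i(\vec x,w),r-w)$ and $W=w^{\max}_i$; since the recursion has only $N$ steps, the total number of linear pieces stays finite throughout.

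Property \emph{(a)} is routine. The only nontrivial point is composition, and there it suffices to note that if $g$ is affine on a rational polyhedral cell $P$ and $\phi$ maps a cell $C$ of a common subdivision of its components into $P$, then $g\circ\phi$ is affine on $C\cap(\phi|_C)^{-1}(P)$, which is a rational polyhedron because $\phi|_C$ is rational affine; taking common refinements of the finitely many subdivisions involved keeps everything finite, and continuity is obviously preserved. In particular $w^{\max}_i(\vec x,r)=\min\{r,\max\{1-x_E\mid E\ni v_i\}\}$ and $\textit{update}_i(\vec x,w)$ (whose $E$-component is $x_E$ or $\min\{1,x_E+w\}$) lie in $\mathcal{L}$, so $(\vec x,r)\mapsto\bigl(\textit{update}_i(\vec x,w^{\max}_i(\vec x,r)),\,r-w^{\max}_i(\vec x,r)\bigr)$ has all components in $\mathcal{L}$; composing with the inductively known $T_j$ and adding $w^{\max}_i$ settles \eqref{eq:greedy} and \eqref{eq:greedy-n}.

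The heart of the proof is \emph{(b)}, which I would argue in two parts. First, continuity: $g$ and $W$ are piecewise linear and continuous on a bounded polytope, hence Lipschitz; given $(\vec x,r)$ with minimizer $w^{*}$ and a nearby $(\vec x',r')$, clipping $w^{*}$ into $[0,W(\vec x',r')]$ moves it by at most $|W(\vec x,r)-W(\vec x',r')|$, and the Lipschitz bound on $g$ then gives $\min\{g(\vec x',\cdot,r')\}\le\min\{g(\vec x,\cdot,r)\}+O(|(\vec x,r)-(\vec x',r')|)$; by symmetry the partial minimum is Lipschitz. Second, the piecewise-linear rational structure: choosing a common rational polyhedral subdivision on which both $g$ and $W$ are affine, the set
$$\widetilde{E}=\{(\vec x,w,r,z):0\le w\le W(\vec x,r),\ z\ge g(\vec x,w,r)\}$$
is a finite union of rational polyhedra; eliminating the variable $w$ by Fourier--Motzkin, its image $\mathrm{epi}(T_{j-1})=\{(\vec x,r,z):z\ge T_{j-1}(\vec x,r)\}$ is again a finite union of rational polyhedra $R_1,\dots,R_M$ (here one uses that the inner minimum is attained, $g$ being continuous on a compact interval). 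Since $T_{j-1}$ is bounded, hence finite-valued, no $R_m$ can contain a downward vertical ray, so over its projection $\pi(R_m)$ the lower envelope of $R_m$ in the $z$-direction is a piecewise linear convex rational function $\psi_m$ — a maximum of the affine rational functions obtained by solving the lower facet inequalities of $R_m$ for $z$ — and $T_{j-1}=\min_m\psi_m$ over $\bigcup_m\pi(R_m)=[0,1]^{\cE}\times[0,1]$. Refining the $\pi(R_m)$ and the linearity regions of the $\psi_m$ to a common rational polyhedral subdivision, on each cell $T_{j-1}$ equals the minimum of the finitely many $\psi_m$ defined there, hence is piecewise linear convex rational on that cell; combined with the continuity just shown, $T_{j-1}\in\mathcal{L}$. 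For Staller's recursion the identical argument applies to hypographs with the roles of $\min$ and $\max$ exchanged.

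Finally, the ``consequence'' is immediate: two affine rational functions coincide on a rational affine subspace, so the boundaries between regions carrying distinct linear pieces of $T_j$ are exactly the rational linear equations $(a-a')\cdot(\vec x,r)+(c-c')=0$ of the adjacent pieces. The step I expect to be the main obstacle is \emph{(b)}: reconciling the straightforward cell-by-cell formula for a partial minimum with the requirement that the outcome be globally continuous and piecewise linear with finitely many pieces. The cleanest route is the polyhedral-projection (Fourier--Motzkin) argument above, whose two load-bearing observations are that $T_{j-1}$ is finite-valued (so the projected polyhedra have no vertical rays) and Lipschitz (so the affine pieces automatically glue into a continuous function, with no ad hoc matching along region boundaries).
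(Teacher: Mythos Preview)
Your proof is correct and follows the same backward-induction scaffolding as the paper: the anchor $T_N\equiv 1$, the closure of PLCR functions under addition, $\min$/$\max$, and composition (your property \emph{(a)}) to handle the greedy steps \eqref{eq:greedy}--\eqref{eq:greedy-n}, and a separate Lipschitz argument for continuity of the partial minimum. The one genuine technical difference is in how you establish the piecewise-linear-rational half of property~\emph{(b)}. The paper first normalises the minimisation range to $[0,1]$ by substituting $\min\{w,w^{\max}_i(\vec x,r)\}$ for $w$, and then argues directly cell by cell: on each polytope $P$ where $\hat F$ is affine with $w$-coefficient $b_P$, the minimum in $w$ is attained on a lower facet (if $b_P>0$), an upper facet (if $b_P<0$), or is independent of $w$ (if $b_P=0$), in each case yielding an affine function of $y$ on the projected cell; the global minimum is then the pointwise minimum of finitely many such affine pieces (Lemma~\ref{PLCR}). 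You instead keep the variable upper bound $W=w^{\max}_i$, pass to the epigraph $\widetilde E$, and eliminate $w$ by Fourier--Motzkin, recovering $T_{j-1}$ as the lower envelope of a finite union of rational polyhedra. Both arguments are standard and equally valid; the paper's facet analysis is slightly more hands-on and makes the linear pieces explicit, while your projection argument is more conceptual and would generalise more readily (e.g.\ to minimisation over several variables at once). One cosmetic slip: on a cell of your common refinement the functions $\psi_m$ are affine, so their pointwise minimum is \emph{concave}, not convex, piecewise linear---but this does not affect the argument.
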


\begin{proof}
We will call a function with all the desired properties
---
piecewise linear, continuous, and
 rational, with finitely many linear pieces
---
a
PLCR function.

The proof proceeds by backward recursion from $T_N$ down to $T_0$.
The function $T_N$ from \eqref{eq:final-T} is obviously PLCR.

The sum, difference, maximum, or minimum of two PLCR functions
is again PLCR,
and the same holds true when
substituting one PLCR function into another.
It follows directly that the functions $w_i^{\max}$ and $\textit{update}_i$ are
PLCR functions on the domain $[0,1]^{\cE}\times[0,1]$.
This allows us to perform the induction step in the recursions
 (\ref{eq:greedy})--(\ref{eq:greedy-n}) for~$G_i$.

In the recursion
\eqref{eq:Edge-hitter} we additionally have a minimization (or, in the
analogous recursion for Staller, a maximization) over some range of values~$w$.
It has the form
\begin{displaymath}
  \min \{\,F(\vec x,r,w) \mid 0\le w \le w^{\max}_i(\vec x,r)\,\}
\end{displaymath}
with the 
PLCR function
 \begin{equation*}
F(\vec x,r,w) := w+
T_{j}(\textit{update}_i(\vec x,w),r-w)
\end{equation*}
To get rid of the varying upper bound on $w$, we rewrite the recursion in terms of
another PLCR function
\begin{equation*}
\hat F(\vec x,r,w) =
F(\vec x,r,\min\{w,w^{\max}_i(\vec x,r)\}
\end{equation*}
as
\begin{equation*}
  T_{j-1}(\vec x,r) = \min\{\,\hat F(\vec x,r,w) \mid 0\le w\le 1\,\}.
\end{equation*}
Lemma~\ref{PLCR} below establishes that $T_{j-1}$ is
 a PLCR function.

The same argument applies to the recursion
 for the Staller ($S_i$),
where $\min$ is replaced by $\max$.
\end{proof}

\begin{lem} \label{PLCR}
  Suppose that
$\hat F(y,w)\colon
[0,1]^m\times[0,1]\to \mathbb{R}
$ is a PLCR function.
Then the function
$ T(y)\colon
[0,1]^m\to \mathbb{R}
$ defined by minimizing over $w$:  
  \begin{equation}\label{T}
  T(y) := \min\{\,\hat F(y,w) \mid 0\le w\le 1\,\}
\end{equation}
 is also a PLCR function.
\end{lem}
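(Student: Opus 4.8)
\noindent\emph{Proof plan.}\ \ The plan is to exhibit $T$ as the pointwise minimum of finitely many PLCR functions and then invoke the fact, noted in the proof of Theorem~\ref{rational}, that finite minima of PLCR functions, and substitutions of PLCR functions into PLCR functions, are again PLCR. Write $P=[0,1]^m\times[0,1]$. Since $\hat F$ is PLCR, fix a subdivision of $P$ into finitely many full-dimensional rational polytopes $C_1,\dots,C_p$ with pairwise disjoint interiors, on each of which $\hat F$ coincides with a rational affine function $L_t$, and let $H_1,\dots,H_q$ be the finitely many rational hyperplanes $H_i=\{(y,w):\alpha_i\cdot y+\beta_i w=\gamma_i\}$ that span the facets of the $C_t$. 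For each index $i$ with $\beta_i\neq 0$ put $g_i(y)=(\gamma_i-\alpha_i\cdot y)/\beta_i$, an affine function of $y$ with rational coefficients, and $\tilde g_i(y)=\min\{1,\max\{0,g_i(y)\}\}\in[0,1]$, which is PLCR. Now set $h_0(y)=\hat F(y,0)$, $h_1(y)=\hat F(y,1)$, and $h_i(y)=\hat F(y,\tilde g_i(y))$ for every $i$ with $\beta_i\neq0$; each of these is PLCR, being obtained from $\hat F$ by substituting affine or PLCR functions for its arguments. Let $G$ be the pointwise minimum of this finite family; then $G$ is PLCR, and it remains to show $T=G$.

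The inequality $T(y)\le G(y)$ is immediate, since every member of the family equals $\hat F(y,w)$ for some $w\in[0,1]$, whereas $T(y)=\min_{w\in[0,1]}\hat F(y,w)$. For the reverse inequality, fix $y$ and choose $w^\ast\in[0,1]$ with $\hat F(y,w^\ast)=T(y)$ (such $w^\ast$ exists, as $\hat F(y,\cdot)$ is continuous on the compact interval $[0,1]$). Pick a cell $C_t$ with $(y,w^\ast)\in C_t$ and consider $I=\{\,w\in[0,1]:(y,w)\in C_t\,\}$, a non-empty closed subinterval of $[0,1]$ containing $w^\ast$. Writing $C_t$ as the intersection of the half-spaces bounded by its facet-hyperplanes, intersected with $[0,1]$, one sees that each endpoint of $I$ lies in $\{0,1\}\cup\{\,g_i(y):\beta_i\neq0\,\}$: a facet with $\beta_i>0$ contributes an upper bound $g_i(y)$ on $w$, one with $\beta_i<0$ a lower bound $g_i(y)$, and one with $\beta_i=0$ imposes no constraint on $w$ (the inequality $\alpha_i\cdot y\le\gamma_i$ holds because $C_t$ meets the slice). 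On $I$ the function $\hat F(y,\cdot)$ agrees with the affine function $L_t(y,\cdot)$, so its minimum over $I$ is attained at an endpoint $e$ of $I$; since $w^\ast\in I$ we get $\hat F(y,e)\le\hat F(y,w^\ast)=T(y)$, and since $e\in[0,1]$ also $\hat F(y,e)\ge T(y)$, hence $\hat F(y,e)=T(y)$. If $e\in\{0,1\}$ then $T(y)\in\{h_0(y),h_1(y)\}$, and if $e=g_i(y)\in[0,1]$ then $\tilde g_i(y)=e$, so $T(y)=h_i(y)$; in either case $G(y)\le T(y)$. Thus $T=G$ is PLCR.

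The routine parts are the closure properties of PLCR functions and the bookkeeping with the clamps $\tilde g_i$. The point that requires care is that the pieces of $\hat F$ may be taken to be rational polytopes whose facet-hyperplanes have rational coefficients --- so that the $g_i$ are rational --- which is precisely the rationality clause in the definition of a PLCR function; together with the small but essential geometric observation that intersecting a convex cell $C_t$ with the vertical line $\{y\}\times\mathbb{R}$ yields an interval whose endpoints are forced to be $0$, $1$, or one of the values $g_i(y)$. This last fact is the crux: it is what makes a fixed finite list of candidate functions suffice simultaneously for all $y$, so that no separate treatment of degenerate parameter values $y$ is needed.
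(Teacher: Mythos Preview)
Your argument is correct, and it rests on the same geometric observation as the paper's: for each fixed $y$, the minimum of $\hat F(y,\cdot)$ over $[0,1]$ is attained where the vertical line $\{y\}\times\mathbb{R}$ meets a facet-hyperplane of one of the linear cells (or at $w\in\{0,1\}$). The executions differ. The paper first establishes continuity of $T$ separately, via a Lipschitz argument, and then records each linear piece $T_{P'}$ only on the projection $\bar P'$ of the relevant facet, taking the value $\infty$ elsewhere; the separate continuity proof is what allows these partially defined pieces to be assembled into a PLCR function. You instead clamp each facet function $g_i$ to $[0,1]$ and substitute the result back into the full function $\hat F$, obtaining candidates $h_i$ that are globally defined PLCR functions on $[0,1]^m$; their finite minimum is then automatically PLCR, and no separate continuity argument is needed. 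Your route is a little more economical and relies only on the closure properties of PLCR functions already recorded in the proof of Theorem~\ref{rational}; the paper's route is perhaps more explicit about which linear piece of $T$ is active on which region.
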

 \begin{proof}
   We first show that $T$ is continuous.
   Since $\hat F$ is PLCR, it is Lipschitz-continuous.
Let $L$ be its Lipschitz constant with respect to the $\infty$-norm.
(We can compute $L$ as the maximum $L_1$-norm of all
coefficient vectors of the linear pieces of
$\hat F$.)
It follows that the function $T$ in \eqref{T}
is also Lipschitz-continuous with Lipschitz-constant~$L$. To see this,
let $\lVert y_0-y_1\rVert \le \eps$, and let
$T(y_0)=\hat F(y_0,w_0)$ for some~$w_0$.
Then
$T(y_1)\le \hat F(y_1,w_0)\le F(y_0,w_0) + L\eps=T(y_0) + L\eps$.
The converse bound
$T(y_0)\le T(y_1) + L\eps$ follows in the same way.

We still need to show that $T$ is piecewise linear.  For an intuitive
way to see this, one can interpret the minimization over $w$
geometrically. The graph of
$\hat F\colon [0,1]^m\times[0,1]\to \mathbb{R} $ is a subset of
$\mathbb{R}^{m+2}$.  Taking the minimum over all $w$ amounts to
projecting away the coordinate corresponding to~$w$ and taking the
lower envelope (with respect to the last coordinate) in the projection
in $\mathbb{R}^{m+1}$.  Figure~\ref{fig:lower-envelope} shows a
two-dimensional illustration.
This picture can also be interpreted as a
three-dimensional view of the graph of a bivariate function
$\hat F(y,w)$ when the viewing direction is parallel to the $w$-axis.
(In this
hypothetical example, the resulting minimum is
discontinuous; this cannot happen when $\hat F$ is continuous and its domain
is the box $[0,1]^m\times[0,1]$.)
\begin{figure}[htb]
  \centering
  \includegraphics{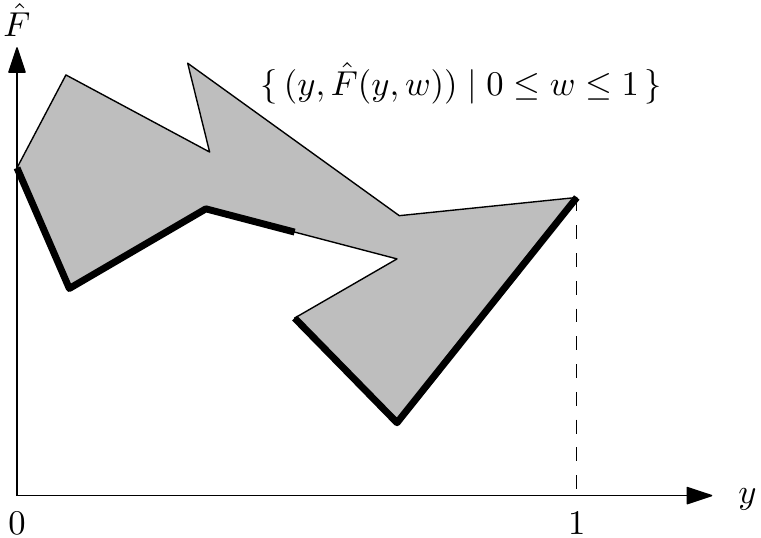}
  \caption{The lower envelope of a polyhedral set in 2 dimensions ($m=1$).}
  \label{fig:lower-envelope}
\end{figure}

Formally, we conduct the proof as follows.
 We know that the domain
$[0,1]^{m+1}$ 
of $\hat F$ splits into
 finitely many rational convex $(m+1)$-dimensional polytopes~$P$
on which $\hat F$ is linear:
\begin{displaymath}
  \hat F(y,w) = a_P y + b_P w + c_P,\quad \text{ for }(y,w)\in P
\end{displaymath}
for some rational coefficient vector $a_P$ and coefficients $b_P$ and $c_P$.
We can thus write $T(y)$ as the minimum of finitely many functions
$T_P(y)$ of the form
\begin{equation}
  \label{eq:T_P}
  T_P(y)
  := \min\{\, a_P y + b_P w + c_P
  \mid 0\le w\le 1, (y,w)\in P\,\},
\end{equation}
where the minimum of an empty set is taken as $\infty$.

For fixed $y$, the minimum
in \eqref{eq:T_P}
depends on the sign of $b_P$. If $b_P>0$, the minimum
is achieved on a boundary point that lies on some facet
$P'$ of $P$ whose outer normal has negative $w$-coordinate.
On such a facet, $w$ can be expressed as a linear function of~$y$, and
thus, $T_P$ can be written as a linear function
\begin{equation}
  \label{eq:T_P'}
  T_{P'}(y) = a_{P'} y + c_{P'},\quad \text{ for } y \in \bar P',
\end{equation}
where $\bar P'$ is the projection of the facet $P'$ to $[0,1]^m$.
Thus,
$T_P(y)$ is the minimum of finitely many functions
$T_{P'}(y)$, with the understanding that
$T_{P'}(y)$ is taken as $\infty$ when $y$ is outside its domain $\bar
P'$.

The situation is similar for $b_P<0$. When $b_P=0$, then $\hat F$ does
not depend on $w$ and we can simply write
\begin{equation}
 \label{eq:T_Pbar}
  T_{P}(y) = a_{P} y + c_{P},\quad \text{ for } y \in \bar P,
\end{equation}
where $\bar P$ is the projection of~$P$.

In summary, the function $T(y)$ can be written as the minimum of
finitely many pieces $T_P(y)$, each of which can in turn be written as
the minimum of finitely many \emph{linear} pieces
\eqref{eq:T_P'}
or~\eqref{eq:T_Pbar}.
All these pieces have rational coefficients and rational domain
boundaries, and since continuity of $T$ has already been established,
the PLCR property of $T$ follows.
 \end{proof}

 The proof of Theorem~\ref{rational} is constructive and, in
 principle, it provides an algorithm for computing 
 the value $T_0(\vec 0,1)$ of the game.
From this, we obtain the following important corollary.

\begin{thm}
 For every finite hypergraph\/ $\cH=(V,\cE)$, the game fractional transversal
  number\/ $\tau_g^*(\cH)$ and its Staller-start version\/ ${\tau_g^*}'(\cH)$
  are rational, and both players have optimal
 strategies in which they play only rational weights.
\end{thm}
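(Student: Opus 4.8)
The plan is to read off both assertions from the PLCR structure of the value functions $T_j$ supplied by Theorem~\ref{rational}.

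\emph{Rationality of the value.} By construction the value of the game is $T_0(\vec 0,1)$, and Theorem~\ref{rational} asserts that $T_0$ is piecewise linear and continuous with finitely many linear pieces, each having rational coefficients and rational constant term. Evaluating such a function at a point with rational coordinates yields a rational number, so $\tau_g^*(\cH)=T_0(\vec 0,1)\in\mathbb{Q}$. For the Staller-start version one runs the construction of Section~\ref{sec:algo} with the schedule of rounds that begins with Staller's rounds; the recursions \eqref{eq:wmax}--\eqref{eq:final-T} and Lemma~\ref{PLCR} do not care which player owns which round (only which submoves carry a $\min$ and which a $\max$), so the resulting $T_0'$ is again PLCR and ${\tau_g^*}'(\cH)=T_0'(\vec 0,1)$ is rational.

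\emph{A rational optimal play.} Call a state $(\vec x,r)$ \emph{rational} if all coordinates of $\vec x$ and the number $r$ are rational. I claim that from a rational state the player to move has an optimal submove of rational weight, and that the resulting state is again rational. For the greedy submoves \eqref{eq:greedy}--\eqref{eq:greedy-n} this is immediate, since $w^{\max}_i(\vec x,r)$ in \eqref{eq:wmax} is rational whenever $(\vec x,r)$ is. For an $H$- or $S$-submove, fix the rational state $(\vec x,r)$ and view $h(w):=w+T_j(\textit{update}_i(\vec x,w),\,r-w)$ as a function of the single variable $w$ on $[0,w^{\max}_i(\vec x,r)]$. Since $\textit{update}_i(\vec x,\cdot)$ and $r-\cdot$ trace a piecewise-linear path with rational breakpoints and rational direction through the rational point $(\vec x,r)$, and $T_j$ is PLCR, $h$ is a continuous piecewise-linear function with finitely many pieces, all of whose breakpoints — and both endpoints of whose domain — are rational, and all of whose slopes and intercepts are rational. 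Hence $h$ attains its minimum (Edge-hitter, cf.\ \eqref{eq:Edge-hitter}) or maximum (Staller) at a rational point $w^\circ$, and then $\textit{update}_i(\vec x,w^\circ)$ and $r-w^\circ$ are rational. Starting from the rational state $(\vec 0,1)$ and letting both players always choose such rational optimal submoves produces a play using only rational weights that attains the extremum in every recursion, hence has duration exactly $T_0(\vec 0,1)=\tau_g^*(\cH)$; the same works for the Staller-start schedule. This already exhibits a pair of rational strategies, optimal against each other, realizing the common value.

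\emph{Optimality against an arbitrary opponent, and the main obstacle.} It remains to upgrade this to: each player has a rational strategy guaranteeing the value against \emph{every} opponent, so that $\tau_g^*(\cH)\in U_\cH$ and $\tau_g^*(\cH)\in L_\cH$ are witnessed by rational play (the $\inf U_\cH=\min U_\cH$ claim promised in Section~\ref{sec:fractransv}). Here one invokes the Continuation Principle. For Edge-hitter: by Theorems~\ref{finite} and~\ref{prop:transp} we may assume Staller's moves are finite and transposable, so after each Staller move the real load function $\ell$ is well defined; Edge-hitter keeps a rational ghost load function $g\ge\ell$ and plays, in the real game, the rational optimal submoves dictated by his canonical strategy for the configuration $g$. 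This is legitimate because a submove legal from $g$ is legal from the pointwise-smaller $\ell$, and applying identical submove weights preserves $g\ge\ell$; and since an actual play lasts at most $n=|V|$ moves (as $|t_q|\le|V|$ always), Theorem~\ref{t:contpr} gives $\tau_g^*(\cH\mid\ell)\ge\tau_g^*(\cH\mid g)$, so passing to the ghost never increases the value Edge-hitter must cope with. Keeping the total weight of the successive roundings $g-\ell$ below a prescribed threshold shows that Edge-hitter forces $|\cG|<\tau_g^*(\cH)+\eps$ for every $\eps>0$; symmetrically, Staller (rounding Edge-hitter's irrational loads \emph{downward} and using Theorem~\ref{t:contpr} in the reverse direction) forces $|\cG|>\tau_g^*(\cH)-\eps$ for every $\eps>0$. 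The remaining step — replacing these $\eps$-bounds by the exact bounds $|\cG|\le\tau_g^*(\cH)$ and $|\cG|\ge\tau_g^*(\cH)$ with a single rational strategy, equivalently showing that the set of values guaranteeable by rational play is closed — is the delicate point; I expect this to be the real obstacle, and would attack it either by a compactness argument using that the $T_j$ have only finitely many linear pieces (so only finitely many "types" of local optimization occur, and a rational optimizer can be fixed uniformly), or by checking that along any play consistent with the canonical strategies the positions a player must actually respond to are rational, so that no rounding is ever forced and one reduces to the second paragraph. Everything else is a routine unwinding of Theorem~\ref{rational} and of the Continuation Principle.
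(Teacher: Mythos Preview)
The paper does not give a detailed proof of this theorem; it records it as an immediate corollary of Theorem~\ref{rational} (``From this, we obtain the following important corollary''). Your first two paragraphs are exactly the unpacking the paper leaves implicit: $T_0(\vec 0,1)$ is rational because a PLCR function takes rational values at rational points, and from any rational state the one-variable function $h(w)$ is piecewise linear with rational breakpoints and endpoints, so an optimal submove weight can always be chosen rational. That much matches the paper's intended argument and is correct.

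Your third paragraph, however, overcomplicates matters and ends in an admitted gap. Two separate claims are in play. The first is $\inf U_\cH=\min U_\cH$ (and its Staller analogue): this follows directly from the recursions~\eqref{eq:Edge-hitter}--\eqref{eq:greedy-n}, because at every stage the extremum over $w$ is attained (continuous function on a compact interval), so the backward-induction strategy ``choose any minimizing $w$'' guarantees Edge-hitter the value $T_0(\vec 0,1)$ against \emph{every} Staller strategy; no rationality, no Continuation Principle, and no rounding are needed here. The second claim is that the players can reach their goals using rational weights. The paper's phrasing (and the abstract: ``they can reach their goals using rational weights'') is naturally read as what your second paragraph already proves: there is an optimal play, consistent with both players' optimal strategies, that uses only rational weights. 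Your attempt to force a single rational strategy that is optimal against an arbitrary irrational opponent---via ghost loads, rounding, and an $\eps\to 0$ limit you cannot close---is neither what the paper argues nor something you bring to a conclusion; you yourself flag the closure step as ``the real obstacle.'' Drop the third paragraph, note in one line that attainment of $\inf U_\cH$ and $\sup L_\cH$ follows from continuity and compactness in the recursion, and your write-up then coincides with the paper's (terse) proof.
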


\begin{rmk}
 It is not true in general that every optimal strategy uses
  only rational weights.
 A simple counterexample is the graph\/ $C_4$ where Staller can start with
  placing\/ $x$ and\/ $1-x$ on two vertices with any\/ $x\in[0,1]$,
  no matter if\/ $x$ is rational or irrational.
\end{rmk}

\section{Concluding remarks and open problems}
\label{sec:concl}

Putting the fractional domination game \cite{BT-fr} into a more general
 context, in this paper we introduced the fractional transversal game
 on hypergraphs.
Among other results we proved that the game value is rational, and both
 players have optimal strategies using rational weights and with a
 finite number of submoves.
Since a dominating set is a transversal of the closed neighborhood
 hypergraph, and total a dominating set is a transversal of the
 open neighborhood hypergraph, the following consequence is immediate.

\begin{thm}
 The fractional versions of both the domination game and the total
  domination game have rational game values on every hypergraph.
 Moreover, in either of these games, both players can achieve their
  common optimum using only rational weights and within finitely many
  submoves.
\end{thm}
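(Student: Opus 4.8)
The plan is to recognize the two fractional games in the statement as particular instances of the fractional transversal game, and then simply invoke the rationality results already established. First I would recall the two auxiliary hypergraphs attached to a graph $G=(V,E)$: the \emph{closed neighborhood hypergraph} $N[G]$ has vertex set $V$ and edge set $\{\,N_G[v]\mid v\in V\,\}$, and the \emph{open neighborhood hypergraph} $N(G)$ has vertex set $V$ and edge set $\{\,N_G(v)\mid v\in V\,\}$, where $N_G[v]$ and $N_G(v)$ denote the closed and open neighborhoods in $G$. (For $N(G)$ to be a hypergraph in the sense used here one needs $G$ to have no isolated vertex — the standard hypothesis for total domination — since otherwise the open neighborhood of an isolated vertex is the empty edge.) By definition, a function $f\colon V\to[0,1]$ is a fractional dominating function of $G$ exactly when $\sum_{u\in N_G[v]}f(u)\ge 1$ for every $v\in V$, that is, exactly when $f$ is a fractional transversal of $N[G]$; similarly $f$ is a fractional total dominating function of $G$ precisely when it is a fractional transversal of $N(G)$.

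Next I would line up the rules of play. Reading ``$u$ dominates $v$'' as ``$u\in N_G[v]$'', i.e.\ ``$u$ is a vertex of the edge $N_G[v]$ of $N[G]$'', a submove $(u,w)$ of Dominator or Staller that usefully raises the domination level of some still-undominated vertex $v$ becomes precisely a submove satisfying condition $(*)$ for that edge; the per-move budget constraint (total weight $1$, and at most $1$ in the last move) and the termination rule (the game ends once the partial function is a fractional dominating function) coincide in the two descriptions. Hence the fractional domination game on $G$ is verbatim the fractional transversal game on $N[G]$, with Dominator in the role of Edge-hitter; in particular the game fractional domination number of $G$ equals $\ftg(N[G])$ and its Staller-start version equals $\ftgp(N[G])$. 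The same correspondence with open neighborhoods identifies the fractional total domination game on $G$ with the fractional transversal game on $N(G)$.

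It then remains to apply what has already been proved. Since $G$ is finite, $N[G]$ and $N(G)$ are finite hypergraphs, so Theorem~\ref{rational} and the corollary following it yield that $\ftg$ and $\ftgp$ of each are rational, and that on each both players have optimal strategies using only rational weights; translating back through the dictionary gives the rationality of the (Dominator-start and Staller-start) game fractional domination and total domination numbers, together with optimal rational-weight strategies. Finally, Theorem~\ref{finite} lets each player replace every move by an equivalent one with each vertex in at most one submove — hence at most $|V|$ submoves — without changing the resulting load function and, when the original weights were rational, keeping them rational; so the common optimum is attained within finitely many submoves, which completes the argument.

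I do not expect a genuine obstacle here, since the proof is essentially a change of vocabulary; the one point that must be checked carefully is that the legality condition and the ``$\le 1$ in the last move'' convention of the fractional (total) domination game as defined in \cite{BT-fr} match, clause for clause, condition $(*)$ and the analogous convention for the transversal game on the neighborhood hypergraph, once the dictionary ``$u$ dominates $v$ iff $u\in N_G[v]$, an edge of $N[G]$'' is fixed. A secondary, purely bookkeeping matter is the isolated-vertex caveat that makes $N(G)$ a legitimate hypergraph in the total domination case.
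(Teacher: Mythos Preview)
Your proposal is correct and follows exactly the paper's approach: the paper states the theorem as an ``immediate'' consequence of the observation that a (total) dominating set is a transversal of the closed (open) neighborhood hypergraph, and you have simply spelled out this dictionary and invoked Theorem~\ref{finite}, Theorem~\ref{rational}, and its corollary. The only added detail on your side is the explicit check that the legality condition~$(*)$ matches the rules of the fractional domination game from~\cite{BT-fr}, and the isolated-vertex caveat for the open neighborhood hypergraph; both are appropriate but do not constitute a different method.
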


We conclude this paper with some conjectures.

\begin{conj}   \label{conj1}
	If each of the first\/ $2k-1$ $($$k \ge 1$$)$ moves was an integer move in the fractional transversal game,
	i.e.\ of the form\/ $(v_{i_1}, 1)$, then Staller has an integer move
	in the\/ $(2k)^{\rm th}$ turn, which is optimal in the fractional transversal game.
\end{conj}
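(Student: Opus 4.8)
\medskip
\noindent\textbf{Proof proposal.}
The plan is to first discard the game history and then recast the statement as a claim that a certain value function attains its maximum at an extreme point.

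\emph{Reduction.}
After $2k-1$ integer moves every cover value $t_{2k-1}(v)$ is a nonnegative integer, hence every load $\ell_{2k-1}(E)$ equals $0$ or $1$, an edge being covered exactly when it contains one of the $2k-1$ chosen vertices. By rule~$(*)$ a submove at a vertex $v$ is admissible only when $v$ lies in a currently uncovered edge, and it changes only the loads of the uncovered edges through $v$. So the rest of the game depends solely on the family $\cE^{\ast}$ of still-uncovered edges, and it is exactly the Staller-start fractional transversal game, from the all-zero load, on the residual hypergraph $\cH^{\ast}=(\bigcup\cE^{\ast},\cE^{\ast})$, with integer moves corresponding to integer moves. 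Thus it suffices to prove: \emph{in the Staller-start fractional transversal game on any hypergraph, Staller has an optimal first move of the form $(v,1)$.} We argue by induction on $|V(\cH^{\ast})|$; the case $|V(\cH^{\ast})|=1$ is trivial. If Staller's optimal first move completes the cover, then the resulting cover function is a fractional transversal of $\cH^{\ast}$ of size $\le 1$; since every edge already forces $|f|\ge 1$, we get $\tau^{\ast}(\cH^{\ast})=1$, which forces $\bigcap\cE^{\ast}\neq\emptyset$ (the support of any size-$1$ fractional transversal lies in every edge), and then $(v,1)$ for any $v\in\bigcap\cE^{\ast}$ is a legal optimal move. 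Henceforth assume Staller's optimum does not end the game, so her optimal move has total weight exactly $1$.

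\emph{Reformulation.}
Since all loads start at $0$, every weight-$1$ distribution $m$ over $V(\cH^{\ast})$ is a legal first move for Staller and, after removing repeated vertices via Theorem~\ref{finite}, these are all of her first moves. The key point is that Staller's own move never triggers the capping $\min\{1,\cdot\}$: because $\sum_{v\in E}m(v)\le\sum_v m(v)=1$, the load it produces is $\ell_m$ with $\ell_m(E)=\sum_{v\in E}m(v)$, which is \emph{linear} in $m$. Hence the value Staller secures by playing $m$ is $1+\tau_g^*(\cH^{\ast}\mid\ell_m)$, and the conjecture is equivalent to saying that the PLCR function
\begin{displaymath}
  m\ \longmapsto\ \tau_g^*\bigl(\cH^{\ast}\mid\ell_m\bigr)
\end{displaymath}
attains its maximum over the simplex $\Delta$ of such distributions at a vertex of~$\Delta$ --- the vertices being precisely the integer moves $(v,1)$. (That the turn belongs to Staller is essential: the analogous claim for Edge-hitter is false --- on $C_4$, $\tau_g^*(C_4)=5/2$, but every integer first move of Edge-hitter leads to value $3$ --- because an integer move keeps all loads in $\{0,1\}$ instead of creating half-filled edges that the opponent could complete cheaply, which favors the maximizing player Staller but not the minimizing Edge-hitter.)

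\emph{Main step and obstacle.}
The natural way to establish the maximum-at-a-vertex property would be to show that $\ell\mapsto\tau_g^*(\cH^{\ast}\mid\ell)$ is \emph{convex} on the sub-polytope $\{\ell_m:m\in\Delta\}$ (it is monotone nonincreasing by Theorem~\ref{t:contpr}); convexity together with the linearity of $\ell_m$ in $m$ would immediately push the maximum to a vertex of $\Delta$, and it would also match the examples, where Staller does best by putting her whole unit of weight on a vertex of smallest degree in $\cH^{\ast}$. The obstacle --- which is why the statement is only conjectured --- is that this convexity fails in general: in the recursion \eqref{eq:Edge-hitter} the alternating $\min$/$\max$ already rules out convexity or concavity, and the capping $\min\{1,\cdot\}$ occurring in the \emph{opponent's later moves} destroys it even locally. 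The plan to get around this is to exploit that, along Staller's optimal continuation, the capping that Edge-hitter forces is mild; concretely one would run a two-game imagination argument comparing ``Staller plays $(v,1)$'' with ``Staller plays her optimal move $m^{\ast}$'', maintaining an invariant between the two load functions together with a bound on the gap between the weights spent so far, and choosing $v$ inside an uncovered edge that $m^{\ast}$ fills least so that the surplus weight Staller spends in the first game is absorbed. A less analytic route is an exchange argument: starting from any maximizer $m$, transfer the entire weight of one submove onto a suitable other vertex without decreasing the value --- using that the value is piecewise linear in $m$ and that $\ell_m$ stays capping-free along such transfers --- and iterate until a single submove remains. The crux in either approach is the incomparability of the load function produced by $(v,1)$ with the one produced by a general move $m^{\ast}$: the former has more edges at load $1$ but also more at load $0$, so Theorem~\ref{t:contpr} does not compare them, and it is this incomparability that makes the move-copying in the imagination argument, and the monotonicity needed in the exchange argument, genuinely hard.
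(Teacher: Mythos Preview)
The paper does not prove this statement: it is presented as Conjecture~\ref{conj1} in Section~\ref{sec:concl} and left open. There is therefore no ``paper's own proof'' to compare against, and your proposal is---as you yourself say---not a proof but an outline of a strategy together with an explanation of where it gets stuck.

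Your reduction and reformulation are sound. After $2k-1$ integer moves every load is in $\{0,1\}$, and the remaining game is indeed the Staller-start fractional transversal game on the residual hypergraph $\cH^{\ast}$ from the all-zero load. Your observation that every weight-$1$ distribution $m$ on $V(\cH^{\ast})$ is a legal first move and produces the \emph{uncapped} load $\ell_m(E)=\sum_{v\in E}m(v)$ is correct (for any ordering of the submoves, $\ell(E)+w_k\le\sum_{s<k}w_s+w_k\le 1$), so the problem really does become: the PLCR function $m\mapsto\tau_g^*(\cH^{\ast}\mid\ell_m)$ attains its maximum on the simplex at a vertex. This is a clean and arguably the right way to state what has to be shown.

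But the ``main step'' is exactly the content of the conjecture, and you do not supply it. Your proposed routes---convexity of $\ell\mapsto\tau_g^*(\cH^{\ast}\mid\ell)$, an imagination argument comparing $(v,1)$ with an optimal $m^{\ast}$, or a weight-transfer exchange argument---are all reasonable things to try, and you correctly identify the obstruction common to all of them: the load vectors produced by $(v,1)$ and by a general $m^{\ast}$ are incomparable under Theorem~\ref{t:contpr}, and the alternating $\min/\max$ in the recursion~\eqref{eq:Edge-hitter} together with the capping in later moves destroys any global convexity. None of the sketched workarounds is carried through, and there is no indication that any of them \emph{can} be carried through in general. In short: the preliminary analysis is fine, but the proposal contains no argument for the decisive step, which is precisely why the paper records the statement as a conjecture rather than a theorem.
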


This means that fractional moves would be advantageous for Edge-hitter only.
If true then this conjecture implies the following weaker one.

\begin{conj}
	\label{conj2}
	For every hypergraph\/ $H$, \enskip
	$\tau^*_g(H) \le \tau_g(H).$
\end{conj}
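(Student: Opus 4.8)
The plan is to have Edge-hitter play the fractional game while maintaining, in his head, an \emph{imagined} integer transversal game on the same $\cH$, in which he follows a fixed optimal strategy $\sigma$ guaranteeing that the integer game lasts at most $\tau_g(\cH)$ moves against every opponent. The first observation that makes this worthwhile is a crude but decisive accounting inequality: in the fractional game every move spends total weight at most $1$ (exactly $1$ except possibly the last), so the value of the play satisfies $|\cG|\le$ (number of moves made). Hence it suffices to exhibit a strategy for Edge-hitter that forces the fractional game to terminate within $\tau_g(\cH)$ moves, irrespective of Staller's play.

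The coupling I would maintain is the invariant that \emph{every edge already covered in the imagined integer game has real load $1$ in the fractional game}; in particular, once the imagined game is over the fractional game is over as well. Edge-hitter's own moves are easy to handle. When $\sigma$ tells him to pick a vertex $v$, he instead plays the single submove $(v,w)$ with $w=\max\{\,1-\ell(E)\mid v\in E,\ E\text{ not yet covered in the imagined game}\,\}$. Since every load lies in $[0,1]$ we have $w\le 1$, so the move is within budget, and adding $w$ at $v$ raises to $1$ the load of every not-yet-imagined-covered edge through $v$. Thus the real move realizes exactly the effect of the imagined move $v$, the invariant is preserved, and because $\sigma$ is optimal the imagined game contributes at most $\tau_g(\cH)$ moves in total.

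The hard part is Staller's turn. Here Staller plays an \emph{arbitrary} legal fractional move of weight $1$ in the real game, and Edge-hitter must \emph{register} it as one legal move of the imagined integer game so that (i) the invariant is restored and (ii) the imagined game advances by exactly one move, keeping the two move counts in step. This is immediate in the ``integer-like'' case, when Staller's move raises some previously uncovered edge to load $1$ \emph{through a single vertex} $u$ that lies in no other still-uncovered edge: Edge-hitter simply registers the imagined move $u$. It breaks down precisely when Staller exploits the extra freedom of fractional play, either by distributing weight $1$ so as to raise many loads without completing any edge at all (so there is no vertex to register, and the real move then has no imagined counterpart), or by completing an edge only through the \emph{combined} contribution of several vertices (so no single registered vertex preserves the invariant). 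Showing that Staller can gain nothing from such maneuvers is exactly the assertion of Conjecture~\ref{conj1}, and this is the genuine obstacle.

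To overcome it I would attack Staller's optimal move through the machinery already developed in the paper. By the backward recursion of Section~\ref{sec:algo}, the maximizing weight in each Staller submove is attained at a vertex of a rational polytope, i.e.\ at an extreme configuration of the PLCR functions $T_j$; combined with the transposability analysis of Theorem~\ref{prop:transp} and Remark~\ref{remark:transposable}, the aim is to show that some optimal Staller move can be taken to complete a single previously uncovered edge at a single vertex, which is the ``very non-transposable'' behaviour Staller is advised to follow. Establishing this reduction would supply the missing registration step above, or, equivalently, prove the needed case of Conjecture~\ref{conj1}; one could then argue by induction that an optimal fractional play exists in which \emph{all} moves are integer, so that the fractional game coincides with an integer game whose length $\sigma$ bounds by $\tau_g(\cH)$, yielding $\ftg(\cH)\le\tau_g(\cH)$.
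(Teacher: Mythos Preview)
There is nothing to compare against: the statement is Conjecture~\ref{conj2} in the paper, and the paper does \emph{not} prove it. The only thing the paper says about it is that it would follow from Conjecture~\ref{conj1}; both are left open.

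Your proposal is not a proof either, and you essentially say so yourself. The imagination-strategy skeleton is the natural one, and your treatment of Edge-hitter's turns is sound (with the small caveat that in a non-final move the rules force total weight exactly~$1$, so after the submove $(v,w)$ Edge-hitter must still spend the residual $1-w$ on further legal submoves; this does not harm your invariant, but it should be said). The genuine gap is exactly where you locate it: after an \emph{arbitrary} legal fractional move by Staller there need not exist any vertex $u$ whose registration in the imagined integer game preserves the invariant, because Staller may spread weight~$1$ without pushing any edge to load~$1$, or may complete an edge only through the combined contribution of several vertices, or may complete an edge at a vertex that lies in other still-uncovered edges. Your last paragraph does not close this gap. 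Extremality of the optimiser in the PLCR recursion of Section~\ref{sec:algo} tells you that the optimal weight in a \emph{single submove} of the structured game is attained at a rational breakpoint; it does not tell you that an entire optimal Staller move concentrates weight~$1$ at one vertex, let alone at a vertex all of whose incident uncovered edges thereby become fully covered. That stronger conclusion is precisely Conjecture~\ref{conj1} (indeed, it is closer to Conjecture~\ref{conj3}), and neither Theorem~\ref{prop:transp} nor Remark~\ref{remark:transposable} supplies it. What you have written is, in effect, a rediscovery of the implication Conjecture~\ref{conj1}~$\Rightarrow$ Conjecture~\ref{conj2} that the paper already records, together with an unexecuted plan of attack on Conjecture~\ref{conj1}; it is not a proof of Conjecture~\ref{conj2}.
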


Perhaps the following stronger version of Conjecture \ref{conj1} is also true.

\begin{conj}
	\label{conj3}
	Starting from any partial covering function,
	there is an optimal strategy for Staller
	where, in every submove, she
	always spends the largest
	legal weight.
\end{conj}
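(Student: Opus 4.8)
The plan is to work inside the \emph{structured game} of Section~\ref{sec:algo}, run from the given partial cover function: its load $\ell$ plays the role of $\ell_0$, and the value functions $T_j(\vec x,r)$ of Theorem~\ref{rational} are defined verbatim. In this language Conjecture~\ref{conj3} asserts exactly that in every Staller recursion, i.e.\ the analogue of \eqref{eq:Edge-hitter} with $\max$ in place of $\min$, the maximum is attained at the endpoint $w=w^{\max}_i(\vec x,r)$. So it suffices to prove, for every Staller index $j$ and all $(\vec x,r)$, the identity
$$\max_{0\le w\le w^{\max}_i(\vec x,r)} \Big[\, w+T_j\big(\textit{update}_i(\vec x,w),\,r-w\big)\Big] = w^{\max}_i(\vec x,r)+T_j\big(\textit{update}_i(\vec x,w^{\max}_i(\vec x,r)),\,r-w^{\max}_i(\vec x,r)\big).$$

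This identity would follow from a single Lipschitz-type monotonicity property of the value functions. Writing $w^*=w^{\max}_i(\vec x,r)$ and, for $0\le w\le w^*$, putting $s=w^*-w$, the fact that $z\mapsto\min\{1,z\}$ is $1$-Lipschitz shows that the two updated load vectors differ by $\vec\delta:=\textit{update}_i(\vec x,w^*)-\textit{update}_i(\vec x,w)$ with $0\le\delta_E\le s$, and $\delta_E=0$ whenever $v_i\notin E$; moreover the budgets satisfy $r-w=(r-w^*)+s$. Hence the identity is implied by the upper bound in the following property $(\mathcal P)$, which I would prove for every $j$ and every vertex $v_i$: for all $\vec x$, all $r,s\ge 0$ with $r+s\le 1$, and every $\vec\delta\ge 0$ with $\delta_E\le s$ for $E\ni v_i$ and $\delta_E=0$ otherwise,
$$T_j(\vec x,r)\ \le\ T_j(\vec x-\vec\delta,\,r+s)\ \le\ T_j(\vec x,r)+s.$$
The restriction that $\vec\delta$ be supported on the edges through one vertex is essential: for load drops spread over pairwise disjoint edges the upper bound fails, since re-covering $n$ disjoint edges after lowering each by $s$ costs about $ns$ rather than $s$. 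Thus $(\mathcal P)$ genuinely encodes that any admissible decrease can be undone by at most $s$ extra weight placed on $v_i$ alone; this is precisely what the continuation principle (Theorem~\ref{t:contpr}) suggests, because from $(\vec x-\vec\delta,\,r+s)$ a single move of weight $\le s$ at $v_i$ reaches a load pointwise dominating $\vec x$ with budget still at least $r$.

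I would establish $(\mathcal P)$ by backward induction on $j$, from the anchor $T_N\equiv 1$ (which satisfies $(\mathcal P)$ trivially) down to $T_0$. The induction step must show that each of the three recursion types preserves $(\mathcal P)$: the Edge-hitter step \eqref{eq:Edge-hitter}, a minimization over $w$; the Staller step, the same with $\max$; and the greedy steps \eqref{eq:greedy} and \eqref{eq:greedy-n}. In each case one fixes the reference vertex $v_i$ of $(\mathcal P)$ and the vertex $v_{i'}$ to which the current submove applies, and transports an optimal (or greedy) choice of $w$ from the state $(\vec x,r)$ to the state $(\vec x-\vec\delta,\,r+s)$, using the inductive $(\mathcal P)$ for $T_j$ together with the PLCR machinery of Lemma~\ref{PLCR}. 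The budget reset in \eqref{eq:greedy-n} needs separate care: once $r$ is reset to $1$ the function $T_j$ no longer depends on the incoming budget, so across that boundary one must check the load-only case $s=0$ of $(\mathcal P)$ and verify that the greedy saturation $w^{\max}_n$ interacts correctly with a load drop on the edges through $v_i$.

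The main obstacle is exactly this induction step when the reference vertex $v_i$ and the submove vertex $v_{i'}$ are distinct while the edges through them overlap: then the update alters some of the very edges on which $\vec\delta$ lives, the optimal $w$ at the two states $(\vec x,r)$ and $(\vec x-\vec\delta,\,r+s)$ may differ, and one must show that the transported strategy loses at most $s$ without ever creating an illegal submove (condition $(*)$). I expect that controlling this coupling — rather than the base case or the reduction — is the delicate point, and it is plausible that $(\mathcal P)$ as stated must be strengthened (for instance to a joint monotonicity in $(\vec x,r)$ together with a one-sided $1$-Lipschitz bound in $r$) so that the invariant is genuinely closed under all four recursions. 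Pinning down the exact self-propagating invariant is the crux on which a complete proof of Conjecture~\ref{conj3} hinges.
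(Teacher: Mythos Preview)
The statement you are attempting to prove is Conjecture~\ref{conj3}: in the paper it appears in Section~\ref{sec:concl} among the open problems, and the authors give no proof, not even a sketch. There is thus no ``paper's own proof'' to compare against; the conjecture is presented as open, and is moreover stated to be strictly stronger than Conjectures~\ref{conj1} and~\ref{conj2}, which are also left open.

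Your proposal is, by your own admission, not a proof: you outline a backward induction for the invariant $(\mathcal P)$ but explicitly flag the induction step when $v_i\ne v_{i'}$ as ``the main obstacle'' and concede that $(\mathcal P)$ may have to be strengthened before it self-propagates. One concrete difficulty you underplay is the budget reset \eqref{eq:greedy-n}: after the last greedy submove of a move the incoming budget is discarded, so any surplus $s$ you are carrying in the $r$-coordinate is simply lost. What survives to the next level is the requirement $T_j(\vec x'-\vec\delta,\text{reset})\le T_j(\vec x',\text{reset})+s$ with \emph{no} budget compensation --- this is not ``the load-only case $s=0$ of $(\mathcal P)$'' as you write, but a genuinely different inequality, and it does not follow from $(\mathcal P)$ as stated. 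Until an invariant is exhibited that actually closes under all four recursion types, including the reset, what you have is a plausible strategy outline consistent with the statement's status as an open conjecture, not a proof.
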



\paragraph{Acknowledgements.}

We are grateful to G\'abor Tardos for helpful discussions.
The first author acknowledges the financial support from the Slovenian
 Research Agency under the project N1-0108.
Research of the third author was supported in part by the National Research,
 Development and Innovation Office -- NKFIH under the grant SNN 129364.

\end{document}